\begin{document}
\newcommand{\up}{\vspace*{-0.05cm}}
\allowdisplaybreaks[1]
\numberwithin{equation}{section}
\renewcommand{\theequation}{\thesection.\arabic{equation}}
\newtheorem{thm}{Theorem}[section]
\newtheorem{lemma}{Lemma}[section]
\newtheorem{pro}{Proposition}[section]
\newtheorem{prob}{Problem}[section]
\newtheorem{quest}{Question}[section]
\newtheorem{ex}{Example}[section]
\newtheorem{cor}{Corollary}[section]
\newtheorem{conj}{Conjecture}[section]
\newtheorem{cl}{Claim}[section]
\newtheorem{df}{Definition}[section]
\newtheorem{rem}{Remark}[section]
\newcommand{\beq}{\begin{equation}}
\newcommand{\eeq}{\end{equation}}
\newcommand{\<}[1]{\left\langle{#1}\right\rangle}
\newcommand{\be}{\beta}
\newcommand{\ee}{\end{enumerate}}
\newcommand{\Bul}{\mbox{$\bullet$ } }
\newcommand{\al}{\alpha}
\newcommand{\ep}{\epsilon}
\newcommand{\si}{\sigma}
\newcommand{\om}{\omega}
\newcommand{\la}{\lambda}
\newcommand{\La}{\Lambda}
\newcommand{\Ga}{\Gamma}
\newcommand{\ga}{\gamma}
\newcommand{\im}{\Rightarrow}
\newcommand{\2}{\vspace{.2cm}}
\newcommand{\es}{\emptyset}

\vspace{-2cm}
\markboth{R. Rajkumar and P. Devi}{Permutability graph of cyclic subgroups}
\title{\LARGE\bf Permutability graph of cyclic subgroups}
\author{R. Rajkumar\footnote{e-mail: {\tt rrajmaths@yahoo.co.in}},\ \ \
P. Devi\footnote{e-mail: {\tt pdevigri@gmail.com}}\\
{\footnotesize Department of Mathematics, The Gandhigram Rural Institute -- Deemed University,}\\ \footnotesize{Gandhigram -- 624 302, Tamil Nadu, India}\\[3mm]
}
\date{}
\maketitle
\begin{abstract}
Let $G$ be a group.
\textit{The permutability graph of cyclic subgroups of $G$}, denoted by $\Gamma_c(G)$, is a graph with all the
proper cyclic subgroups of $G$ as its vertices and two distinct vertices in $\Gamma_c(G)$ are adjacent if and only if
the corresponding subgroups permute in $G$.
In this paper, we classify the finite groups whose permutability graph of cyclic subgroups belongs to one of the following: bipartite, tree, star graph, triangle-free, complete bipartite, $P_n$, $C_n$, $K_4$,  $K_{1,3}$-free, unicyclic.
We classify abelian groups  whose permutability graph of cyclic subgroups are planar.
Also we investigate the connectedness, diameter, girth, totally disconnectedness, completeness and regularity of these graphs.
\paragraph{Keywords:}Permutability graph, cyclic subgroup, bipartite graph, planar.
\paragraph{2010 Mathematics Subject Classification:}05C25,  05C10, 20F16.
\end{abstract}

\section{Introduction} \label{sec:1}
The properties of a group can be studied by assigning a suitable graph to it and by analyzing the properties of the associated graphs
 using the tools of graph theory. The Cayley graph is a well known example of a graph associated to a group, which have been studied extensively in the
literature (see, for example, \cite{elena, li}).
In the past twenty five years many authors have assigned various graphs to  study some specific properties of groups . For instance, see~\cite{abdolla1, cameron, herz,  manz}.

Recall that two subgroups $H$ and $K$ of a group $G$ are said to \emph{permute} if $HK=KH$; equivalently $HK$ is a subgroup of $G$. In \cite{asc}, Aschbacher defined a graph corresponding to a group $G$ and for a fixed prime $p$, having all the subgroups of order $p$  as its vertices and two
vertices  are adjacent if they permute. To study the transitivity of permutability of subgroups,
Bianchi, Gillio and Verardi in \cite{binachi2},
defined a graph corresponding to a group $G$, called
the \emph{permutability graph of non-normal subgroups of $G$}, having all the proper non-normal subgroups of $G$ as its vertices and two
vertices  are adjacent if they permute (see, also in \cite{binachi3, gillio1}).  In \cite{raj},  the authors considered the generalized case of this graph, called
\emph{the permutability graph of subgroups of $G$}, denoted by $\Gamma(G)$, having the vertex set consisting of all proper subgroups
of $G$ and two vertices  are adjacent if they permute.

 In \cite[p.14]{bal}, Ballester-Bolinches et al introduced
a graph  corresponding to a group $G$, having all the cyclic subgroups of $G$ as it vertices and two
vertices  are adjacent if they permute. In this paper, as a particular case, we consider a graph, denoted by $\Gamma_c(G)$ with vertex set consists of all proper cyclic subgroups of $G$ and two vertices  are adjacent if they permute. We will call this graph as \emph{the permutability graph of cyclic subgroups of $G$}. By investigating the properties of this graph, we study the permutability of cyclic subgroups of the corresponding group. Especially, Theorems~\ref{ct14}, \ref{ct108} and \ref{ct51},  Corollaries~\ref{pc15} and  \ref{ct102} in this paper are some of the main applications for group theory.

Now we introduce some notion from graph theory that we will use in this
article. Let $G$ be a simple graph with vertex set $V(G)$ and edge set $E(G)$. $G$ is said to be \emph{complete} if any two of
its vertices are adjacent. A complete graph with $n$ vertices is denoted by $K_n$. $G$ is \textit{bipartite} if $V(G)$ is the union of two
disjoint sets $X$ and $Y$ such that no two vertices in the same subset are adjacent. Here $X$ and $Y$ are called a \textit{bipartition} of $G$.
A bipartite graph $G$  with bipartition $X$ and $Y$ is called
\textit{complete bipartite} if every vertex in $X$ is adjacent with every vertex in $Y$. If $|X|=m$ and $|Y|=n$, then the corresponding
graph is denoted by $K_{m,n}$. In particular, $K_{1,n}$ is called the \textit{star graph} and $K_{1,3}$ is called the \textit{claw graph}. A graph is \emph{planar} if it can be drawn in a plane so that no two edges intersect except possibly at vertices. The \textit{degree}
of the vertex $v$ in $G$ is the  number of edges incident with
$v$ and is denoted by $\deg_G(v)$. A graph is said to be
\textit{regular} if  degrees of
all the vertices are same. A \textit{path} joining two vertices $u$ and $v$ in $G$ is a finite sequence  $(u=) v_0, v_1, \ldots, v_n (=v)$ of
distinct vertices, except,
possibly, $u$ and $v$ such that $u_i$ is adjacent with $u_{i+1}$, for all $i=0, 1, \ldots , n-1$.
A path joining $u$ and $v$ is a cycle if $u=v$. The length of a path or cycle is the number of edges in it. A path or cycle of
length $n$ is denoted by $P_n$ or $C_n$ respectively. A graph with exactly one cycle is said to be \emph{unicyclic}.
A graph is a \emph{tree} if it has no cycles. The \emph{girth} of a graph $G$ is the length of the smallest cycle in it and is denoted by $girth(G)$.

A graph is said to be \textit{connected} if every pair of distinct vertices can be joined by a path. The \emph{distance} between two vertices
 $u$ and $v$ in $G$, denoted by $d(u,v)$, is the length of the shortest path between them, and $d(u,v)=0$ if $u=v$. If there exists no path between them,
 then we define $d(u,v)=\infty$. The \emph{diameter} of $G$, denoted by $diam(G)$ is the maximum distance between any two vertices in the graph.
 An \emph{isomorphism} of graphs $G_1$ and $G_2$ is an edge-preserving bijection
 between the vertex sets of $G_1$ and $G_2$.  $G$ is said to be
$H$-\textit{free} if $G$ has no induced subgraph isomorphic to $H$. Let $G_1 =(V_1, E_1)$ and $G_2 = (V_2, E_2)$ be two
simple graphs. Their \textit{union} $G_1 \cup G_2$ is a graph with
vertex set $V_1 \cup V_2$ and edge set $E_1 \cup E_2$. Their \textit{join} $G_1 + G_2$ is a graph consist of $G_1 \cup G_2$ together with all the lines
joining points of $V_1$ to points of $V_2$. For any connected graph $G$, we write $nG$ for the graph with $n$ components each isomorphic to $G$.
 For basic graph theory terminology, we refer to \cite{harary}.

The dihedral group of order $2n$, $n\geq 3$ is defined by $D_{2n}= \langle a, b~|~ a^{n}= b^2= 1, ab= ba^{-1} \rangle$. For any integer $n \geq 2$,
the generalized Quaternion group of order $4n$ is given by
$Q_{4n} = \big < a, b ~|~a^{2n} = b^4 = 1, a^n = b^2 = 1, bab^{-1} = a^{-1}\big >$.
The modular group of order $p^\alpha$, $\alpha \geq 3$ is given by
$M_{p^\alpha}=\langle a,b~|~a^{p^{\alpha-1}}=b^p=1, bab^{-1}=a^{p^{\alpha-2}+1}\rangle$.
 For an integer $n \geq 1$,
$S_n$ and $A_n$ denotes the symmetric group and alternating group of degree $n$ acting on
$\{1,2,\ldots,n\}$ respectively. If $n$ is a any positive integer, then $\tau(n)$ denotes the number of positive divisors of $n$.
We denote the order of an element $a \in \mathbb Z_n$ by $\text{ord}_n(a)$. The number of Sylow $p$-subgroups of a group $G$
is denoted by $n_p(G)$; or simply by $n_p$ if there is no ambiguity.

The rest of the paper is arranged as follows: In Section~\ref{sec:2}, we study some basic properties of permutability graph
of cyclic subgroups of groups.

Section~\ref{mainsec:3} gives the classification of finite groups
whose permutability graphs of cyclic subgroups are one of the following: bipartite, tree, star graph, triangle-free, complete bipartite,
 $P_n$, $C_n$, $K_4$,  $K_{1,3}$-free, unicyclic. We estimate the girth of the permutability graphs of cyclic subgroups of finite groups. We also characterize the groups having totally disconnected permutability graphs of cyclic subgroups.

 In Section~\ref{sec:9}, we investigate connectedness, diameter, regularity, completeness of the permutability graph of cyclic subgroups
of a given group.  Also we classify  abelian groups whose permutability graph of cyclic subgroups are planar. We characterize the groups $Q_8$, $S_3$ and $A_4$
by using their permutability graph of cyclic subgroups. Moreover, we pose some open problems in this section.


We recall the following theorem, which we will  use in the subsequent sections.
%
%
\begin{thm}\label{pct100}\textbf{(\cite[Corollary 5.1]{raj})}
Let $G$ be a finite group and $p$, $q$ be distinct primes. Then
\begin{itemize}
\item [(i)] $\Gamma(G)$ is $C_n$ if and only if $n=3$ and $G$ is either $\mathbb Z_{p^4}$ or $\mathbb Z_2\times \mathbb Z_2$;

\item [(ii)] $\Gamma(G)$ is $P_n$ if and only if  $n=1$ and $G$ is either $\mathbb Z_{p^3}$ or $\mathbb Z_{pq}$;

\item [(iii)] $\Gamma(G)$ is claw-free if and only if $G$ is either $\mathbb Z_{p^\alpha}$ $(\alpha=2,3,4)$ or $\mathbb Z_{pq}$.
\end{itemize}
\end{thm}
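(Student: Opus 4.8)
The plan is to convert each graph-theoretic condition into a statement about the proper nontrivial subgroups of $G$ and their permutability, and then to read off the groups. Two elementary reductions do most of the work. First, if $G$ is abelian then $HK=KH$ for all subgroups $H,K$, so $\Gamma(G)$ is the \emph{complete} graph on its vertices; in particular the subgroups of a cyclic group form a chain, there are $\tau(n)-2$ proper nontrivial ones, and $\Gamma(\mathbb{Z}_n)\cong K_{\tau(n)-2}$. Second, a normal subgroup (indeed any permutable subgroup) permutes with every subgroup, hence is a \emph{universal vertex} of $\Gamma(G)$, adjacent to all other vertices. I will use the first reduction to manufacture the abelian examples and the second to eliminate the rest.

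Parts (i) and (ii) then reduce to counting. Since $C_3=K_3$ and $P_1=K_2$, the first reduction immediately yields the matches: $\Gamma(G)\cong K_3$ requires exactly three proper nontrivial subgroups, and a direct check shows the only such groups are $\mathbb{Z}_{p^4}$ and $\mathbb{Z}_2\times\mathbb{Z}_2$ (both abelian, so both genuinely give $K_3$); similarly $\Gamma(G)\cong K_2$ requires exactly two, forcing $\tau(n)=4$, i.e. $\mathbb{Z}_{p^3}$ or $\mathbb{Z}_{pq}$. It remains to exclude $C_n$ and $P_n$ for larger $n$. If $\Gamma(G)\cong C_n$ with $n\ge 4$, the graph is connected, $2$-regular and triangle-free, so it has no universal vertex; by the second reduction $G$ has no proper nontrivial normal subgroup and is therefore nonabelian simple, and I then force a contradiction from its subgroup lattice, since a chain $1<H_1<H_2<P$ in a Sylow subgroup produces a triangle while a subgroup of prime order lying in three larger subgroups produces a vertex of degree at least $3$. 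For $P_n$ with $n\ge 2$, a nontrivial normal subgroup would be a universal vertex of degree $|V|-1$, whereas $P_n$ has maximum degree $2$; hence $|V|\le 3$, and $|V|=3$ is impossible because the two groups with three proper nontrivial subgroups give $K_3\ne P_2$. This leaves $|V|=2$, that is $n=1$.

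For part (iii) the approach is to hunt for an induced claw, i.e. a subgroup $N$ that permutes with three subgroups $A,B,C$ which pairwise do not permute. I would first check directly that every listed group has a claw-free graph (here $\Gamma(G)$ is $K_1$, $K_2$, $K_3$ or $K_2$), and then seek such a configuration in the remaining groups. The natural source of a claw is a universal vertex $N$—a normal subgroup or a common minimal subgroup—together with three pairwise non-permuting subgroups; in a nonabelian group these typically come from conjugate subgroups of prime order whose pairwise products have order not dividing $|G|$, so that $AB$ is not a subgroup. I would organise the argument by the number of distinct prime divisors of $|G|$ and by the relevant Sylow counts, keeping track of when mutually incomparable subgroups appear.

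The bookkeeping for the abelian and cyclic cases in the second paragraph is routine. The real obstacle is the converse in (i) and (iii): excluding the nonabelian simple groups, and deciding precisely when a group admits a subgroup that permutes with three pairwise non-permuting subgroups. These are group-theoretic rather than graph-theoretic questions, turning on the behaviour of Sylow subgroups, on conjugacy among subgroups of prime order, and on exactly when a product $HK$ fails to be a subgroup; I expect essentially all of the difficulty to sit in the resulting case analysis, separating $p$-groups from groups with several prime divisors and abelian from nonabelian groups.
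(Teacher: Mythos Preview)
The paper does not prove this theorem at all: it is quoted verbatim from \cite[Corollary~5.1]{raj} and used as a black box (``We recall the following theorem, which we will use in the subsequent sections''). So there is no proof in the paper to compare your proposal against; any argument you supply is already more than what appears here.

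As to the proposal itself, the abelian bookkeeping is fine, but the reductions you lean on have real gaps. For $C_n$ with $n\ge 4$ you correctly deduce that $G$ must be nonabelian simple, but the sketch ``a chain $1<H_1<H_2<P$ in a Sylow subgroup produces a triangle'' presumes some Sylow subgroup has order at least $p^3$; you still need to dispose of simple groups all of whose Sylow subgroups have order $p$ or $p^2$ (for $p^2$ you can use either a $\mathbb Z_p\times\mathbb Z_p$ to get a triangle, or a $\mathbb Z_{p^2}$ together with a second prime to build one, but this has to be said). For $P_n$ with $n\ge 2$ your argument silently assumes $G$ has a proper nontrivial normal subgroup; the nonabelian simple case is not covered by what you wrote and needs the same kind of Sylow analysis. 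Finally, ``a direct check shows the only groups with exactly three proper nontrivial subgroups are $\mathbb Z_{p^4}$ and $\mathbb Z_2\times\mathbb Z_2$'' is true but is itself a small classification, not a one-line check; and part~(iii) is, as you acknowledge, only a plan for a case analysis rather than an argument.
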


\section{Some basic results} \label{sec:2}

Note that the only groups having  no  proper cyclic subgroups are the trivial group, and the groups of prime order, so
 it follows that, we can define $\Gamma_c(G)$ only when the group $G$ is not isomorphic to either of these groups.

In this section, we study some basic properties about of permutability graph of cyclic subgroups of a given group.
We start with the following result whose proof is immediate.

\begin{lemma}\label{permutability cyclic l2}
Let $G$ be a group. If $G$ has $r$ proper cyclic subgroups, which are permutes with each other, then $\Gamma_c(G)$ has $K_r$ as a subgraph.
\end{lemma}

\begin{thm}\label{ct20}
 Let $G_1$ and $G_2$ be two groups. If $G_1\cong G_2$, then $\Gamma_c(G_1)\cong \Gamma_c(G_2)$.
\end{thm}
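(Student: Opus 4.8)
The plan is to promote the given group isomorphism to a graph isomorphism in the natural way, namely via the induced map on cyclic subgroups. Let $\phi\colon G_1 \to G_2$ be a group isomorphism. First I would record the elementary facts that make the construction well defined: for any $x \in G_1$ one has $\phi(\langle x\rangle)=\langle \phi(x)\rangle$, so the image of a cyclic subgroup is again cyclic; and since $\phi$ is a bijection with $\phi(G_1)=G_2$, a cyclic subgroup $H\le G_1$ is proper if and only if $\phi(H)$ is proper in $G_2$. Thus $\phi$ restricts to a well-defined map $\Phi$ from the vertex set of $\Gamma_c(G_1)$ to that of $\Gamma_c(G_2)$, given by $\Phi(H)=\phi(H)$.

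Next I would check that $\Phi$ is a bijection of vertex sets. Applying the same two observations to the inverse isomorphism $\phi^{-1}$ produces a map $\Psi$ from the vertices of $\Gamma_c(G_2)$ back to those of $\Gamma_c(G_1)$, and since $\phi^{-1}\circ\phi=\mathrm{id}$ and $\phi\circ\phi^{-1}=\mathrm{id}$ on group elements, one obtains $\Psi\circ\Phi=\mathrm{id}$ and $\Phi\circ\Psi=\mathrm{id}$ on the respective vertex sets. Hence $\Phi$ is a bijection.

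The heart of the argument is adjacency preservation. Here the key identity is that, for subsets (in particular subgroups) $H,K\le G_1$, the homomorphism property yields the equality of product sets $\phi(HK)=\phi(H)\phi(K)$, and likewise $\phi(KH)=\phi(K)\phi(H)$. Consequently, if $H$ and $K$ permute, i.e.\ $HK=KH$, then $\phi(H)\phi(K)=\phi(HK)=\phi(KH)=\phi(K)\phi(H)$, so $\Phi(H)$ and $\Phi(K)$ permute in $G_2$; the converse implication follows by the identical computation applied to $\phi^{-1}$. Therefore $H$ and $K$ are adjacent in $\Gamma_c(G_1)$ if and only if $\Phi(H)$ and $\Phi(K)$ are adjacent in $\Gamma_c(G_2)$, so $\Phi$ is an edge-preserving bijection, that is, a graph isomorphism.

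I expect no genuine obstacle in this proof, as the statement is essentially a functoriality assertion. The only point requiring (minor) care is the set-level identity $\phi(HK)=\phi(H)\phi(K)$, which is precisely where the homomorphism property of $\phi$ is invoked, together with the symmetric treatment of both directions through $\phi^{-1}$ so as to secure the ``if and only if'' demanded by the adjacency condition.
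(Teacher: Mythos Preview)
Your proposal is correct and follows essentially the same approach as the paper: both define the map $\Phi(H)=\phi(H)$ induced by the group isomorphism $\phi$ and argue that it is a graph isomorphism. The paper simply asserts that this map is ``easy to see'' to be a graph isomorphism, whereas you have written out the verification that $\Phi$ is well defined on proper cyclic subgroups, bijective via $\phi^{-1}$, and edge-preserving through the identity $\phi(HK)=\phi(H)\phi(K)$; there is no substantive difference in strategy.
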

\begin{proof}
  Let $f:G_1\rightarrow G_2$ be a group isomorphism. Define a map $\psi:V(\Gamma_c(G_1))\rightarrow V(\Gamma_c(G_2))$ by $\psi(H)=f(H)$,
for every $H\in V(\Gamma_c(G_1))$.
Then it is easy to see that $\psi$ is  a graph isomorphism.
\end{proof}

\begin{rem}The converse of Theorem~\ref{ct20} is not true. For example, consider the non-isomorphic groups $G_1=\mathbb Z_{p^5}$, where $p$ is a prime and
$G_2=\mathbb Z_3\times \mathbb Z_3$. Here $G_1$ has subgroups $\mathbb Z_{p^i}$, $i=1,2,3,4$ and $G_2$ has proper cyclic subgroups
$\langle (1,0)\rangle$, $\langle (x,1)\rangle$, $x=0,1,2$.
It follows that $\Gamma_c(G_1)\cong K_4\cong \Gamma_c(G_2)$.
\end{rem}

\begin{thm}\label{ct1}
If $G$ is a group and $N$ is a subgroup of $G$, then $\Gamma_c(N)$ is a subgraph of $\Gamma_c(G)$.
\end{thm}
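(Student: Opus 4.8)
The plan is to verify directly the two defining conditions of a subgraph: containment of vertex sets and containment of edge sets. First I would check that every vertex of $\Gamma_c(N)$ is a vertex of $\Gamma_c(G)$. A vertex of $\Gamma_c(N)$ is a proper cyclic subgroup $H$ of $N$. Since $H$ is cyclic and $H \subseteq N \subseteq G$, the subgroup $H$ is a cyclic subgroup of $G$ as well. Moreover, $H$ is proper in $G$: because $H$ is proper in $N$ we have the strict inclusion $H \subsetneq N \subseteq G$, whence $H \neq G$. Thus $H \in V(\Gamma_c(G))$, giving $V(\Gamma_c(N)) \subseteq V(\Gamma_c(G))$.

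Next I would check the edge containment. Suppose $H$ and $K$ are adjacent in $\Gamma_c(N)$; then $H$ and $K$ are distinct proper cyclic subgroups of $N$ with $HK = KH$. The key observation is that the product $HK = \{ hk ~|~ h \in H,\ k \in K \}$ is a subset of $G$ determined solely by the elements of $H$ and $K$, and is independent of whether we regard $H$ and $K$ as subgroups of $N$ or of $G$. Consequently the equality $HK = KH$ continues to hold when $H$ and $K$ are viewed inside $G$, so $H$ and $K$ permute in $G$. Since they remain distinct vertices of $\Gamma_c(G)$ by the first step, they are adjacent in $\Gamma_c(G)$, and therefore $E(\Gamma_c(N)) \subseteq E(\Gamma_c(G))$.

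Combining the two containments yields that $\Gamma_c(N)$ is a subgraph of $\Gamma_c(G)$. I do not expect any genuine obstacle here: the only points requiring a little care are confirming that a proper subgroup of $N$ remains proper in $G$ (handled by the strict inclusion $H \subsetneq N \subseteq G$) and that permutability is a purely set-theoretic condition on the product $HK$ which does not depend on the ambient group. I would also remark that this argument yields $\Gamma_c(N)$ as a subgraph but not necessarily as an \emph{induced} subgraph, which is consistent with what the statement claims.
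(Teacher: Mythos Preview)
Your argument is correct. In the paper this theorem is stated without proof, so there is nothing to compare your approach against; the direct verification you give of the vertex-set and edge-set containments is exactly the natural argument and is presumably what the authors had in mind.
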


\section{Some classification related results for $\Gamma_c(G)$}\label{mainsec:3}
The aim of this section is to classify the solvable  groups
whose permutability graphs of cyclic subgroups are one of the following: bipartite, complete bipartite, tree, star graph, $C_3$-free,
$C_n$, $K_4$, $P_n$, $K_{1,3}$-free, unicyclic. First we consider the finite groups and then we deal with the infinite groups.

\subsection{Finite abelian groups}\label{sec:3}

\begin{pro}\label{ct2}
 Let $G$ be a finite abelian group and $p$, $q$ be distinct primes. Then
\begin{enumerate}[{\normalfont (i)}]
\item $\Gamma_c(G)$ is $C_3$-free if and only if $G$ is either $\mathbb Z_{p^{\alpha}}$ $(\alpha = 2,3)$ or $\mathbb Z_{pq}$;
\item $\Gamma_c(G)$ is bipartite if and only if it is $C_3$-free;
\item $\Gamma_c(G)$ is $C_n$ if and only if $n=3$ and $G$ is either $\mathbb Z_{p^4}$ or $\mathbb Z_2\times \mathbb Z_2$;
\item $\Gamma_c(G)$ is $P_n$ if and only if $n=1$ and $G$ is either $\mathbb Z_{p^3}$ or $\mathbb Z_{pq}$;
\item $\Gamma_c(G)$ is $K_4$ if and only if $G$ is one of $\mathbb Z_{p^5}$, $\mathbb Z_{p^2q}$, $\mathbb Z_3\times \mathbb Z_3$;
\item $\Gamma_c(G)$ is claw-free if and only if $G$ is one of $\mathbb Z_{p^\alpha}$ $(\alpha=2,3,4)$, $\mathbb Z_{pq}$,
$\mathbb Z_2\times \mathbb Z_2$;
\item $\Gamma_c(G)$ is unicyclic if and only if $G$ is either $\mathbb Z_{p^4}$ or $\mathbb Z_2\times \mathbb Z_2$.
\end{enumerate}
\end{pro}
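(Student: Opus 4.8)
The plan is to exploit the single structural fact that in an abelian group every pair of subgroups permutes: if $H,K\le G$ with $G$ abelian then $HK=KH$, so $HK$ is again a subgroup and $H$, $K$ are adjacent in $\Gamma_c(G)$. Hence, writing $r=r(G)$ for the number of proper nontrivial cyclic subgroups of $G$, the graph $\Gamma_c(G)$ is the complete graph $K_r$ for every finite abelian $G$: by Lemma~\ref{permutability cyclic l2} it contains $K_r$, and being a graph on $r$ vertices it can contain nothing more. This collapses all seven items into a question about $K_r$ together with the single combinatorial quantity $r(G)$. The elementary facts I would record about complete graphs are: $K_r$ is triangle-free, equivalently bipartite, iff $r\le2$; $K_r\cong C_n$ iff $r=3$ (and then $n=3$); $K_r\cong P_n$ with $n\ge1$ iff $r=2$ (then $n=1$), the degenerate single vertex $K_1$ arising from $\mathbb Z_{p^2}$ not being counted as a nontrivial path; $K_r\cong K_4$ iff $r=4$; $K_r$ is unicyclic iff $r=3$, since $K_3=C_3$ is the only complete graph with exactly one cycle; and, consistently with Theorem~\ref{pct100}(iii), $K_r$ contains the claw $K_{1,3}$ exactly when some vertex has three neighbours, i.e.\ iff $r\ge4$, so $\Gamma_c(G)$ is claw-free iff $r\le3$. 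With this dictionary, (i)--(ii) need $r\le2$, (iii) and (vii) need $r=3$, (iv) needs $r=2$, (v) needs $r=4$, and (vi) needs $r\le3$.

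The real work is the purely group-theoretic classification of all finite abelian $G$ with $r(G)\le4$, which I would carry out by counting cyclic subgroups. First I would bound the number of prime divisors of $|G|$: if three distinct primes $p,q,s$ divide $|G|$, then $G$ already contains three prime-order cyclic subgroups together with cyclic subgroups of orders $pq$, $ps$, $qs$, forcing $r\ge6$; hence $|G|$ has at most two prime divisors. For a $p$-group $P$ I would split into the cyclic case $P=\mathbb Z_{p^\alpha}$, where the subgroups form a chain and $r=\tau(p^\alpha)-2=\alpha-1$, giving $\mathbb Z_{p^2},\mathbb Z_{p^3},\mathbb Z_{p^4},\mathbb Z_{p^5}$ for $r=1,2,3,4$; and the noncyclic case, where the number of subgroups of order $p$ alone equals $(p^k-1)/(p-1)\ge p+1$ for rank $k\ge2$. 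This already forces $r\ge3$, and after checking that anything beyond $\mathbb Z_p\times\mathbb Z_p$ (say $\mathbb Z_2\times\mathbb Z_4$ or a rank-$3$ group) contributes further cyclic subgroups, it isolates exactly $\mathbb Z_2\times\mathbb Z_2$ ($r=3$) and $\mathbb Z_3\times\mathbb Z_3$ ($r=4$).

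For $|G|=p^aq^b$ I would use that a cyclic subgroup of $G=P\times Q$ is precisely a product $A\times B$ of a cyclic subgroup $A\le P$ and a cyclic subgroup $B\le Q$, their orders being coprime; so the total number of cyclic subgroups is $c_Pc_Q$ and $r(G)=c_Pc_Q-1-\delta$, where $\delta=1$ exactly when $G$ is cyclic. Evaluating $c_P,c_Q$ for the admissible factors then forces $r\le4$ only for $\mathbb Z_{pq}$ ($2\cdot2-1-1=2$) and $\mathbb Z_{p^2q}$ ($3\cdot2-1-1=4$), every larger choice such as $\mathbb Z_{p^3q}$, $\mathbb Z_{p^2q^2}$ or $(\mathbb Z_p)^2\times\mathbb Z_q$ pushing $r$ to $6$ or more. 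Collecting the cases yields the table $r=1:\ \mathbb Z_{p^2}$;\ $r=2:\ \mathbb Z_{p^3},\mathbb Z_{pq}$;\ $r=3:\ \mathbb Z_{p^4},\mathbb Z_2\times\mathbb Z_2$;\ $r=4:\ \mathbb Z_{p^5},\mathbb Z_{p^2q},\mathbb Z_3\times\mathbb Z_3$, and reading this against the dictionary of the first paragraph delivers each of (i)--(vii) at once. I expect the main obstacle to lie not in the verifications but in the \emph{completeness} of this classification: one must argue uniformly, across the cyclic, noncyclic, one-prime and two-prime abelian types, that possessing at most four proper cyclic subgroups is this restrictive, and the cleanest way to keep the bookkeeping honest is the multiplicative identity $r=c_Pc_Q-1-\delta$ in the two-prime case together with the rank and divisor counts for prime powers.
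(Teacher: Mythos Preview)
Your proposal is correct and follows essentially the same strategy as the paper: both arguments rest on the observation that in a finite abelian group every pair of subgroups permutes, so $\Gamma_c(G)\cong K_r$ with $r$ the number of proper cyclic subgroups, and then reduce each of (i)--(vii) to the determination of those $G$ with $r\le 4$. Your treatment is somewhat more self-contained---you re-derive the cyclic case directly via $r=\alpha-1$ rather than invoking Theorem~\ref{pct100}, and your multiplicative identity $r=c_Pc_Q-1-\delta$ for the two-prime case is a tidy way to handle what the paper does by exhibiting subgroups of $\mathbb Z_{pq}\times\mathbb Z_p$---but the underlying decomposition into cyclic versus non-cyclic $p$-groups and the one- versus two-prime split is the same.
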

\begin{proof} Let $|G|=p_1^{\alpha_1}p_2^{\alpha_2}\ldots p_k^{\alpha_k}$, where $p_i$'s are distinct primes
and $\alpha_i\geq 1$ for every $i=1,2, \ldots k$.
We divide the proof into two cases.

\noindent \textbf{Case 1:} If $G$ is cyclic, then $\Gamma_c(G)\cong \Gamma(G)$.
So in view of this fact and by the proof of \cite[Theorem 3.1]{raj}, we have
\begin{equation}\label{e5}
 \Gamma_c(G)\cong K_r,
\end{equation}
 where $r$ is the number of proper subgroups of $G$, which is given by $r= (\alpha_1 + 1)(\alpha_2 + 1) \cdots (\alpha_k +1)- 2$.
It follows that $\Gamma_c(G)\cong K_4$ if and only if $G$ is one of $\mathbb Z_{p^5}$ or $\mathbb Z_{p^2q}$. Furthermore, $\Gamma_c(G)$ is bipartite or $C_3$-free if and only if $G$ is either $\mathbb Z_{p^{\alpha}}$ $(\alpha = 2,3)$ or $\mathbb Z_{pq}$.
Note that the bipartiteness and $C_3$-freeness of permutability graphs of finite cyclic groups were proved  in \cite[Proposition 3.1 and corollary 3.1]{raj1}. We repeated them here for the sake of completeness.
Also by Theorem~\ref{pct100}, we have
 \begin {itemize}
\item [(i)] $\Gamma_c(G)$ is $C_n$ if and only if $n=3$ and $G\cong\mathbb Z_{p^4}$.

\item [(ii)] $\Gamma_c(G)$ is $P_n$ if and only if  $n=1$ and $G$ is either $\mathbb Z_{p^3}$ or $\mathbb Z_{pq}$.

\item [(iii)] $\Gamma_c(G)$ is claw-free if and only if $G$ is one of $\mathbb Z_{p^\alpha}$ $(\alpha=2,3,4)$, $\mathbb Z_{pq}$.
\end{itemize}

\noindent \textbf{Case 2:} If $G$ is non-cyclic, then we have the following cases to consider:

\noindent \textbf{Subcase 2a:} $k=1$. If $\alpha_1>2$, then $G$ has a subgroup isomorphic to either $\mathbb Z_{p}\times \mathbb Z_{p}\times \mathbb Z_{p}$ or
$\mathbb Z_{p^2}\times \mathbb Z_{p}$, for some prime $p$. It is easy to see that these groups have at least five proper cyclic subgroups, so they form
$K_5$ as a subgraph of $\Gamma_c(G)$.
If $\alpha_1=2$, then $G\cong \mathbb Z_p\times \mathbb Z_p$, for some prime $p$. But the number of nontrivial subgroups of
$\mathbb Z_p\times \mathbb Z_p$ is $p+1$; they are $\langle(1,0)\rangle$, $\langle (a,1)\rangle$, for each $a\in \{0,1,2,\ldots,p-1\}$. Thus, by
Lemma~\ref{permutability cyclic l2},
\begin{equation}\label{e1}
\Gamma_c(G)\cong K_{p+1}.
\end{equation}
 Therefore, $\Gamma(G_1)$ contains $C_3$ as a subgraph; it is $C_3$ if and only if $p=2$; it is
$K_4$ if and only if $p=3$; it is claw-free if and only if $p=2$.

\noindent \textbf{Subcase 2b:} $k>1$. If $\alpha_i>1$ for some $i$, then $G$ has a subgroup $H$ isomorphic to $\mathbb Z_{pq}\times \mathbb Z_{p}$,
for some distinct primes $p$ and $q$. It is easy to
see that $H$ has at least five proper cyclic subgroups, so they form $K_5$ as a subgraph of $\Gamma_c(G)$.

The proof follows by combining these cases.
\end{proof}

\subsection{Finite non-abelian groups}\label{sec:4}

\begin{pro}\label{ct5}
 Let $G$ be a non-abelian of order $p^{\alpha}$, where $p$ is a prime and $\alpha\geq3$. Then $\Gamma_c(G)$ contains $C_3$ and $K_{1,3}$ as proper
subgraphs; $\Gamma_c(G)\cong K_4$ if and only if $G\cong Q_8$.
\end{pro}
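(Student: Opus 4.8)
The plan is to treat the two assertions separately, in both cases exploiting a single fixed central cyclic subgroup of order $p$ as a ``universal'' vertex, and then to pin down the exceptional group through the structure of $p$-groups with a unique subgroup of order $p$. Throughout I would fix $\langle z\rangle\le Z(G)$ with $|z|=p$, which exists since $Z(G)\neq 1$. Being central, $\langle z\rangle$ is normal, so $\langle z\rangle K=K\langle z\rangle$ for every subgroup $K$; hence $\langle z\rangle$ is adjacent in $\Gamma_c(G)$ to every other vertex. Part~1 then reduces to two facts: that $\Gamma_c(G)$ has at least four vertices, and that it contains a triangle.

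For the triangle I would split according to whether $G$ has one or several subgroups of order $p$. If $G$ has a second subgroup of order $p$, pick a generator $x$ with $\langle x\rangle\neq\langle z\rangle$; since $z$ is central, $\langle x,z\rangle$ is abelian, so the three distinct subgroups $\langle z\rangle,\langle x\rangle,\langle xz\rangle$ of order $p$ pairwise permute and form a $C_3$. If instead $G$ has a unique subgroup of order $p$, the classical classification forces $G$ (non-abelian, hence non-cyclic) to be generalized quaternion; for $Q_8$ the three subgroups of order $4$ are normal and give a triangle, while for the larger ones the nested chain $\langle a\rangle\supset\langle a^2\rangle\supset\langle a^4\rangle$ inside the cyclic maximal subgroup does, all pairwise permuting by Lemma~\ref{permutability cyclic l2}. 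For the count of vertices the same split works: in the first case there are at least $p+1$ subgroups of order $p$, which is $\ge 4$ when $p\ge 3$, and for $p=2$ I add a cyclic subgroup of order $\ge 4$ generated by any element of order $>2$, which exists because a non-abelian group is not elementary abelian; in the quaternion case $Q_8$ has exactly four proper cyclic subgroups and the larger ones have strictly more. Since $\Gamma_c(G)$ then contains both a triangle and a claw (the latter centred at $\langle z\rangle$), it has at least four vertices and is not triangle-free, so neither copy can be all of $\Gamma_c(G)$; hence both are proper subgraphs.

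For the second assertion the forward implication is the computation $\Gamma_c(Q_8)\cong K_4$: the four proper cyclic subgroups of $Q_8$, namely the centre and the three subgroups of order $4$, are all normal and hence pairwise permute. The converse is the crux. Assuming $\Gamma_c(G)\cong K_4$, the group $G$ has exactly four proper cyclic subgroups, and every subgroup of order $p$ is one of them. Since the number of subgroups of order $p$ in a $p$-group is $\equiv 1\pmod p$ and is at most $4$, I would argue prime by prime. For $p\ge 5$ this number must be $1$, forcing $G$ cyclic or generalized quaternion, hence cyclic (as $p$ is odd), a contradiction. For $p=3$ it is $1$ or $4$; the value $1$ is excluded as before, and the value $4$ would mean every proper cyclic subgroup has order $3$, so $G$ has exponent $3$, impossible for $|G|\ge 27$ because that produces far more than four subgroups of order $3$. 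For $p=2$ the count is $1$ or $3$; if it is $3$, the remaining cyclic subgroup must have order exactly $4$ (order $\ge 8$ would contribute extra vertices through its own subgroup chain), and counting elements then forces $|G|=6$, impossible; if it is $1$, then $G$ is generalized quaternion, and only $Q_8$ has exactly four proper cyclic subgroups. Hence $G\cong Q_8$.

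The step I expect to be the main obstacle is precisely this prime-by-prime counting in the converse: it needs the congruence on the number of subgroups of order $p$, the classification of $p$-groups having a unique subgroup of order $p$, and the bookkeeping observation that a cyclic subgroup of order $p^k$ with $k\ge 2$ already forces several distinct vertices (its entire chain of subgroups). This last point is what eliminates every generalized quaternion group beyond $Q_8$ and fixes the order of the fourth subgroup in the $p=2$ case, so I would take care to state and use it explicitly.
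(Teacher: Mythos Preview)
Your argument is correct and takes a genuinely different route from the paper's.

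The paper proceeds by exhaustive case analysis: for $\alpha=3$ it invokes Burnside's classification of non-abelian groups of order $p^3$ and checks each of $Q_8$, $M_8$, $M_{p^3}$, and $(\mathbb Z_p\times\mathbb Z_p)\rtimes\mathbb Z_p$ by hand, exhibiting explicit cyclic subgroups in each; for $\alpha\geq 4$ it splits into $G\cong Q_{2^\alpha}$ versus $G\not\cong Q_{2^\alpha}$, in the latter case passing to a subgroup of order $p^3$ and invoking the $\alpha=3$ work. The ``only $Q_8$ gives $K_4$'' direction is obtained as a by-product of this exhaustion, since every other case is shown to have at least five vertices.

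Your approach is more structural and uniform. Fixing a central $\langle z\rangle$ of order $p$ as a universal vertex immediately gives the $K_{1,3}$ once four vertices are in hand, and your dichotomy on the number of subgroups of order $p$ (several, yielding a copy of $\mathbb Z_p\times\mathbb Z_p$; or exactly one, forcing generalized quaternion) handles both the triangle and the vertex count without any group-by-group classification. For the converse you replace the paper's exhaustion by the congruence on the number of order-$p$ subgroups together with element counting and the chain observation that a cyclic subgroup of order $p^k$ already contributes $k$ distinct vertices. This is cleaner and scales without separating $\alpha=3$ from $\alpha\geq 4$; the paper's approach, on the other hand, produces explicit generators and incidences in each small group, which it later reuses for the $p^\alpha q$ and $p^2q^2$ propositions.
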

\begin{proof}
We first prove this result when $\alpha=3$.
According to the Burnside \cite{burn}, up to isomorphism there are only four non-abelian groups of order $p^3$, where $p$ is a prime,
namely $Q_8$, $M_8$, $M_{p^\alpha}$ and
$(\mathbb Z_p\times \mathbb Z_p)\rtimes \mathbb Z_p$, $p >2$. If $G\cong Q_8$, then
by \cite[Theorem 4.3~]{raj}, we have
\begin{equation}\label{e2}
 \Gamma_c(G)\cong K_4.
\end{equation}
 If $G\cong M_8$, then
$H_1:=\langle a\rangle$, $H_2:=\langle a^2\rangle$, $H_3:=\langle b\rangle$, $H_4:=\langle ab\rangle$, $H_5:=\langle a^2b\rangle$ are proper cyclic
subgroups of $G$, so $|V(\Gamma_c(G))|\geq 5$.
Since $H_1$, $H_2$ are normal in $G$, they permutes with all the subgroups of $G$. Thus, $\Gamma_c(G)$ has $C_3$ as a subgraph induced by the
vertices $H_1$, $H_2$, $H_3$; but it is not $K_4$ as it has five vertices.
Also $K_{1,3}$ is a subgraph of $\Gamma_c(G)$ with bipartition $X:=\{H_1\}$ and $Y:=\{H_2$, $H_3$, $H_4\}$.
If $G\cong M_{p^\alpha}$, where $p$ is a prime and $p>2$, then $H_1:=\langle a\rangle$, $H_2:=\langle ab\rangle$, $H_3:=\langle ab^2\rangle$,
$H_4:=\langle b\rangle$,
$H_5:=\langle a^p\rangle$ are proper cyclic subgroups of $G$, so $|V(\Gamma_c(G))|\geq 5$.
Here any two subgroups of $G$ permutes, so $K_5$ is a subgraph of $\Gamma_c(G)$.
If $G\cong (\mathbb Z_p\times \mathbb Z_p)\rtimes \mathbb Z_p$, then $\mathbb Z_p\times \mathbb Z_p$ is a subgroup of $G$ and since $p>2$, so by \eqref{e1},
 $\Gamma_c(G)$ contains $K_4$ as a proper subgraph. Clearly $|V(\Gamma_c(G))|\geq 5$.

Now we prove this result when $\alpha\geq 4$. We need to consider the following two cases:

\noindent\textbf{Case 1:} $G\cong Q_{2^\alpha}$. Then $G$ has two subgroups each isomorphic to $Q_8$, so in the view of \eqref{e2}, $\Gamma_c(G)$ contains $C_3$ and $K_{1,3}$
as proper subgraphs. Also $G$ has at least five proper cyclic subgroups, so $|V(\Gamma_c(G))|\geq 5$.

\noindent\textbf{Case 2:} $G\ncong Q_{2^\alpha}$. By \cite[Proposition 1.3]{scott}, the number of subgroups of order $p$ of $G$ is not unique and so by
\cite[Theorem IV, p.129]{burn}, $G$ has at least three subgroups, say $H_i$, $i=1,2,3$ of order $p$; also it has a subgroup,
say $H$ of order $p^3$. Suppose $\Gamma_c(H)$ contains $C_3$ and $K_{1,3}$; also $|V(\Gamma_c(H))|\geq 5$, then
$\Gamma_c(G)$ also has the same. So by Propositions~\ref{ct2} and \ref{ct5}, the only cases remains to check are
$H\cong\mathbb Z_{p^3}$ or $Q_8$. If $H\cong \mathbb Z_{p^3}$, then by~\eqref{e1}, $\Gamma_c(H)\cong K_2$, so $H$ together with its subgroups forms $C_3$ as
a subgraph of $\Gamma_c(G)$. The cyclic subgroups of $H$ together with the subgroups $H_i$'s make $|V(\Gamma_c(G))|\geq 5$. By
\cite[Corollary of Theorem IV, p.129]{burn}, $G$ has a normal subgroup of order $p$, without loss of generality, say $H_1$. Then  $K_{1,3}$ is a subgraph of $\Gamma_c(G)$ with
bipartition $X:=\{H_1\}$ and $Y:=\{H, H_2, H_3\}$. If $H\cong Q_8$, then by \eqref{e2}, $\Gamma_c(H)\cong K_4$. Also the cyclic subgroups of $H$
together with $H_i$'s also make $|V(\Gamma_c(G))|\geq 5$.

The proof follows by combining all the above.
\end{proof}

\begin{pro}\label{ct6}
 Let $G$ be the non-abelian group of order $pq$, where $p,q$ are distinct primes and $p<q$. Then
 $\Gamma_c(G)\cong K_{1,q}$.
\end{pro}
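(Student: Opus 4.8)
The plan is to determine the complete subgroup lattice of $G$ by Sylow theory and then read off the adjacencies of $\Gamma_c(G)$ directly from the permutability criterion $H_iH_j=H_jH_i$. First I would recall the structure of a non-abelian group of order $pq$ with $p<q$. By Sylow's theorems, $n_q\equiv 1\pmod q$ and $n_q\mid p$; since $p<q$ this forces $n_q=1$, so the Sylow $q$-subgroup $N$ is unique and hence normal in $G$. For the Sylow $p$-subgroups, $n_p\equiv 1\pmod p$ and $n_p\mid q$, so $n_p\in\{1,q\}$; if $n_p=1$ then both Sylow subgroups would be normal and $G$ would be their (internal) direct product, hence abelian, a contradiction. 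Therefore $n_p=q$, and $G$ has exactly one subgroup $N$ of order $q$ together with exactly $q$ subgroups $P_1,\ldots,P_q$ of order $p$.

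Next I would enumerate the vertices of $\Gamma_c(G)$. Any cyclic subgroup has order dividing $pq$, namely $1$, $p$, $q$, or $pq$; the trivial subgroup and $G$ itself (which is non-cyclic) are not vertices, so every vertex has order $p$ or $q$. Thus $V(\Gamma_c(G))=\{N,P_1,\ldots,P_q\}$, a set of $q+1$ vertices. Since $N$ is normal, $NP_i=P_iN$ for each $i$, so $N$ is adjacent to every $P_i$; this produces the $q$ edges radiating from the central vertex $N$.

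It remains to show that distinct leaves are never adjacent, and this is the only mildly delicate step. For distinct $P_i,P_j$, both of prime order $p$, we have $P_i\cap P_j=\{1\}$, so $|P_iP_j|=|P_i|\,|P_j|/|P_i\cap P_j|=p^2$. Because $p\neq q$ and $q$ is prime, $p^2\nmid pq$, so by Lagrange $P_iP_j$ cannot be a subgroup of $G$; hence $P_i$ and $P_j$ do not permute and are non-adjacent in $\Gamma_c(G)$. Combining the two cases, $N$ is adjacent to all $q$ vertices $P_1,\ldots,P_q$ while the $P_i$ are pairwise non-adjacent, which is precisely the star graph $K_{1,q}$. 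The main obstacle is thus confined to the order count $|P_iP_j|=p^2\nmid pq$ that rules out permutability of two Sylow $p$-subgroups; the remainder is routine Sylow counting and an application of the definition of $\Gamma_c(G)$.
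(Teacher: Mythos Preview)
Your proof is correct and complete. The paper's own proof is essentially a citation: it observes that every proper subgroup of $G$ has prime order, hence is cyclic, so $\Gamma_c(G)=\Gamma(G)$, and then invokes the computation of $\Gamma(G)$ from an earlier paper of the same authors. The Sylow analysis you carry out---unique normal Sylow $q$-subgroup, exactly $q$ Sylow $p$-subgroups, and the order count $|P_iP_j|=p^2\nmid pq$ ruling out permutability among the $P_i$---is precisely what underlies that cited result, so your argument is the same in substance but self-contained rather than deferred.
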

\begin{proof}
We have $G \cong \mathbb Z_q \rtimes \mathbb Z_p$.
Here every subgroup of $G$ is cyclic, so $\Gamma_c(G)\cong \Gamma(G)$.
By the proof of Theorem~4.4 in \cite{raj}, we have
 \begin{equation}\label{e3}
  \Gamma_c(G) \cong K_{1,q}.
 \end{equation}
 This completes the proof.
\end{proof}

Consider the semi-direct product $\mathbb Z_q \rtimes_{t} \mathbb Z_{p^{\alpha}} = \langle a,b | a^q= b^{p^{\alpha}}= 1, bab^{-1}= a^i,
{ord_{q}}(i)= p^t \rangle$, where $p$ and $q$ are distinct primes with $p^t~|~(q-1)$, $t \geq 0$. Then every semi-direct product $Z_q \rtimes Z_{p^{\alpha}}$
 is  one of these types \cite[Lemma 2.12]{boh-reid}. In the
future, when $t = 1$ we will suppress the subscript.

\begin{pro}\label{ct7}
Let $G$ be  a non-abelian group of order $p^2q$, where $p,q$ are distinct primes. Then
$\Gamma_c(G)$ contains $C_3$ as a proper subgraph; it is $K_{1,3}$-free if and only if $G\cong A_4$; it has at least five vertices.
\end{pro}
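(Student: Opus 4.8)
\noindent The plan is to argue along the classification of the non-abelian groups of order $p^2q$. Such a $G$ has a unique normal Sylow subgroup $S$: if both Sylow subgroups were normal, $G$ would be their direct product and hence abelian, each factor being abelian of order $p^2$ or $q$. So exactly one of three situations occurs, the complementary Sylow class having more than one member: (A) $S\cong\mathbb Z_q$ is normal and the Sylow $p$-subgroup $P$ is not; (B) $S=P\cong\mathbb Z_{p^2}$ is normal; (C) $S=P\cong\mathbb Z_p\times\mathbb Z_p$ is normal. In case (C) I would further split according to whether the $\mathbb Z_q$-action on $P$ fixes a line.

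For the $C_3$-subgraph it suffices to produce three pairwise-permuting proper cyclic subgroups. Whenever a cyclic Sylow subgroup $\mathbb Z_{p^2}$ is present (cases (A),(B)), the chain $\mathbb Z_p<\mathbb Z_{p^2}$ together with a cyclic Sylow subgroup of the other prime forms a triangle, since the normal Sylow subgroup permutes with every subgroup; when $P\cong\mathbb Z_p\times\mathbb Z_p$, three of its $p+1$ order-$p$ subgroups do, any two having product $P$ (cf.\ \eqref{e1} and Lemma~\ref{permutability cyclic l2}). Properness follows once the vertex count exceeds $3$. For $|V(\Gamma_c(G))|\ge 5$ I would note that the non-normal Sylow class always has at least three members (in (A), $p\mid q-1$ forces $q=n_p\ge 3$; in (B),(C) one has $n_q\in\{p,p^2\}\ge 3$) and adjoin the cyclic subgroups coming from the normal Sylow subgroup.

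The substance is the claw-freeness. For $A_4$ a direct computation gives $\Gamma_c(A_4)\cong K_3\cup 4K_1$: the three order-$2$ subgroups pairwise permute, each product being the normal $V_4$, whereas an order-$3$ subgroup permutes with no other cyclic subgroup (its product with an order-$2$ subgroup has $6$ elements, but $A_4$ has no subgroup of order $6$; two order-$3$ subgroups give a product of size $9$, and $9\nmid 12$). Since $\Gamma_c(A_4)$ has maximum degree $2$, it is $K_{1,3}$-free. For the converse I would run a claw-centre argument: if $G$ has a proper nontrivial \emph{normal cyclic} subgroup $N$, then $N$ is adjacent to every other vertex, so it suffices to produce three pairwise non-permuting proper cyclic subgroups as leaves. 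The decisive fact is that two cyclic subgroups lying in distinct conjugate Sylow subgroups of the non-normal prime cannot permute: in the clean instances (B),(C) two distinct Sylow $q$-subgroups have product of size $q^2$ with $q^2\nmid p^2q$, and in (A) a short Sylow-intersection argument gives the same for order-$p$ subgroups drawn from distinct Sylow $p$-subgroups. Taking $N$ to be the normal $\mathbb Z_q$ in (A), the normal $\mathbb Z_{p^2}$ (or its characteristic $\mathbb Z_p$) in (B), or an invariant line of $P$ in (C) when the action fixes one, then yields an induced $K_{1,3}$. This settles every $G$ carrying a proper nontrivial normal cyclic subgroup.

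The step I expect to be the real obstacle is case (C) with $\mathbb Z_q$ acting \emph{irreducibly} on $P\cong\mathbb Z_p\times\mathbb Z_p$ --- equivalently $q$ odd and $q\mid p+1$ --- since this is precisely the regime in which $G$ has \emph{no} proper nontrivial normal cyclic subgroup, so the centre trick is unavailable and $\Gamma_c(G)$ must be computed by hand. Here the $p+1$ order-$p$ subgroups all lie in the single normal $P$, pairwise permute, and span $K_{p+1}$, while irreducibility forbids any subgroup of order $pq$, leaving every Sylow $q$-subgroup isolated; hence $\Gamma_c(G)\cong K_{p+1}\cup n_qK_1$. The group $A_4$ is the member $(p,q)=(2,3)$ of this family, and this is exactly where the uniqueness assertion is delicate: $K_{p+1}\cup n_qK_1$ is $K_{1,3}$-free for \emph{every} admissible pair in this irreducible family (for instance $(\mathbb Z_5\times\mathbb Z_5)\rtimes\mathbb Z_3$ of order $75$ gives $K_6\cup 25K_1$), so singling out $A_4$ as the only claw-free example would require an additional hypothesis distinguishing $(2,3)$ from larger pairs. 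I would therefore expect the bulk of the careful work --- and any necessary sharpening of the statement --- to be concentrated precisely in this irreducible case.
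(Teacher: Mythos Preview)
Your structural organisation by the normal Sylow subgroup is cleaner than the paper's approach, which runs through Burnside's explicit list of groups of order $p^2q$ case by case (six families $G_1,\dots,G_6$ according to the congruences among $p$ and $q$) and for each one writes down enough cyclic subgroups to exhibit $C_3$, count five vertices, and --- in all cases except $A_4$ --- exhibit what it calls a $K_{1,3}$.

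More importantly, your suspicion about the irreducible case is well founded, and in fact you have put your finger on a genuine error in the paper rather than a difficulty in your own argument. In the paper's Case~2c (your case~(C) with $q\mid p+1$ and the $\mathbb Z_q$-action irreducible, $p>2$), the paper only records that $\Gamma_c(G_6)$ contains $K_4$ as a subgraph, coming from $\mathbb Z_p\times\mathbb Z_p$. But $K_4$ is itself claw-free, so this does not produce an \emph{induced} $K_{1,3}$, which is what ``claw-free'' requires by the paper's own definition. Your computation $\Gamma_c(G_6)\cong K_{p+1}\cup n_qK_1$ is correct: irreducibility forbids subgroups of order $pq$, so every Sylow $q$-subgroup is isolated, and the only vertices of degree $\ge 3$ lie in a clique. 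The concrete instance $(\mathbb Z_5\times\mathbb Z_5)\rtimes\mathbb Z_3$ of order $75$, with $\Gamma_c\cong K_6\cup\overline{K_{25}}$, is a counterexample to the ``only if'' direction of the claw-freeness claim as stated. The paper's parallel treatment of the diagonal family $G_{5(t)}$ in Case~2b happens to be salvageable (there a normal order-$p$ line exists and three non-permuting Sylow $q$-subgroups give an induced claw), but the argument offered there is the same insufficient ``contains $K_4$'' remark.

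So your proof sketch is correct as far as it goes; the obstacle you anticipated is not a gap in your reasoning but a flaw in the proposition itself, which the paper's own proof does not repair.
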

\begin{proof}
  Here we use the classification of groups of order $p^2q$ given in \cite[p.~76-80]{burn}.
We have the following cases to consider:

\noindent \textbf{Case 1:} $ p< q$:

\noindent \textbf{Case 1a:}  $p \nmid (q-1)$. By Sylow's Theorem, it is easy to see that there is no non-abelian group in this case.

\noindent \textbf{Case 1b:}  $p ~|~ (q-1)$, but $p^2 \nmid (q-1)$. In this case, there are two non-abelian groups.

The first group is $G_1:=\mathbb Z_q\rtimes\mathbb Z_{p^2}=\langle a,b~|~a^q=b^{p^2}=1,{bab}^{-1}=a^i,ord_q(i)=p\rangle$.
 It has $H_1:=\langle a\rangle$, $H_2:=\langle ab^p\rangle$, $H_3:=\langle b\rangle$, $H_4:=\langle b^p\rangle$, $H_5:=\langle ab\rangle$
as its proper cyclic subgroups, so $|V(\Gamma_c(G_1))|\geq 5$. Here $H_1$ and $H_2$ are normal in $G$, so they permutes with all the subgroups of $G$;
$H_4$ is a subgroup
of $H_3$ and $H_5$. So $K_4$ is a subgraph of $\Gamma_c(G_1)$ induced by $H_i$, $i=1,2,3,4$.

The second group in this case is $G_2:=\langle a,b,c~|~a^q=b^p=c^p=1,bab^{-1}=a^i,ca=ac,cb=bc,{ord_q}(i)=p\rangle$.
It has $H_1:=\langle a\rangle$, $H_2:=\langle b\rangle$, $H_3:=\langle c\rangle$, $H_4:=\langle bc\rangle$,
$H_5:=\langle ab\rangle$ as its proper cyclic subgroups, so $|V(\Gamma_c(G_2))|\geq 5$. Here $H_3$ permutes with all the subgroups of $G_2$; $H_2$, $H_3$, $H_4$
permutes with each other. So $C_3$ is a subgraph of $\Gamma_c(G_2)$ induced by the vertices $H_2$, $H_3$, $H_4$; and $K_{1,3}$ is a subgraph of
$\Gamma_c(G_2)$ with bipartition $X:=\{H_3\}$ and $Y:=\{H_1, H_2, H_3\}$.

\noindent \textbf{Case 1c:} $p^2~|~(q-1)$.  In this case, we have both groups $G_1$ and  $G_2$ from Case 1b
 together with the group $G_3:=\mathbb Z_q{\rtimes_{2}}\mathbb Z_p=\langle a,b~|~a^q=b^{p^2}=1,bab^{-1}=a^i,{ord_{q}}(i)=p^2\rangle$.
But in Case 1b, we already dealt with $G_1$ and $G_2$. Now we consider $G_3$. It has $H_1:=\langle a\rangle$, $H_2:=\langle b\rangle$,
 $H_3:=\langle b^p\rangle$, $H_4:=\langle ab\rangle$, $H_5:=\langle a^2b\rangle$ as its proper cyclic subgroups, so $|V(\Gamma_c(G_3))|\geq 5$.
Since $H_1$ is normal in $G_3$,
it permutes with all the subgroups of $G_3$; $H_3$ is a subgroup of $H_2$. So $C_3$ is a subgraph of $\Gamma_c(G)$ induced by $H_1$, $H_2$, $H_3$ and
$K_{1,3}$ is a subgraph of $\Gamma_c(G_3)$ with bipartition $X:=\{H_1\}$ and $Y:=\{H_2$, $H_3$, $H_4\}$.

\noindent \textbf{Case 2:} $ p> q$:

\noindent \textbf{Case 2a:} $ q ~\nmid~ (p^{2}-1)$. In this case there is no non-abelian group.

\noindent \textbf{Case 2b:} $ q~|~(p-1)$. In this case there are two groups. The first one is
$G_4:=\langle a,b~|~ a^{p^2}=b^q=1,bab^{-1}= a^i,{ord_{p^2}}(i)=q\rangle$. It has $H_1:=\langle a\rangle$, $H_2:=\langle a^p\rangle$,
 $H_3:=\langle a^pb\rangle$, $H_4:=\langle b\rangle$, $H_5:=\langle ab\rangle$ as its proper cyclic subgroups, so $|V(\Gamma_c(G_4))|\geq 5$.
Since $H_1$ is a normal subgroup of $G_4$,
so it permutes with all the subgroup of $G_4$; $H_2H_3=\langle a^p,b\rangle=H_2H_4$; $H_2H_5=\langle a^p,ab\rangle$. So $C_3$ is a subgraph of
$\Gamma_c(G_4)$ induced by $H_1$, $H_2$, $H_4$; $K_{1,3}$ is a subgraph of $\Gamma_c(G_4)$
with bipartition $X:=\{H_1\}$ and $Y:=\{H_2$, $H_3$, $H_4\}$.

Next, we have the family of groups $\langle a,b,c~|~a^p=b^p=c^q=1,cac^{-1}=a^i,cbc^{-1}=b^{i^t},ab=ba,{ord_{p}}(i)=q\rangle$.
 There are $(q+3)/2$ isomorphism types in this family (one for $t=0$ and one for each pair $\{x$, $x^{-1}\}$ in $\mathbb F^{\times}_p$.
We will refer to all of these groups as $G_{5(t)}$ of order $p^2q$.
They have a subgroup $H$ isomorphic to $\mathbb Z_p\times \mathbb Z_p$. Since $p>2$, so by \eqref{e1}, $\Gamma_c(G_{5(t)})$ contains $K_4$ as
a subgraph. In addition to these four vertices, $\Gamma_c(G_{5(t)})$ have $\langle c\rangle$ as their vertex, so $|V(\Gamma_c(G_{5(t)}))|\geq 5$.

\noindent \textbf{Case 2c:}  $q ~|~ (p+1)$. In this case, we have only one group of order
${p^2}q$, given by $G_6 := (\mathbb Z_p \times \mathbb Z_p) \rtimes \mathbb Z_q = \langle a,b,c~|~a^p= b^p= c^q= 1, ab= ba, cac^{-1}= a^{i}b^{j},
cbc^{-1}= a^{k}b^{l} \rangle$, where $\bigl(\begin{smallmatrix}
  i & j\\ k & l
\end{smallmatrix} \bigr)$ has order $q$ in $GL_2(p)$.  It has a subgroup $H$ isomorphic to $\mathbb Z_p \times \mathbb Z_p$. Since $p>2$, so by \eqref{e1},
$\Gamma_c(G_6)$ contains $K_4$ as a subgraph. In addition to these four vertices, $\Gamma_c(G_6)$ has $\langle c\rangle$ as its vertex,
so $|V(\Gamma_c(G_6))|\geq 5$.

Note that if $(p,q)=(2,3)$, the Cases 1 and 2 are not mutually exclusive. Up to isomorphism, there
are three non-abelian groups of order 12: $\mathbb Z_3\rtimes\mathbb Z_4$, $D_{12}$, and $A_4$.
In Case 1b we already dealt with $\mathbb Z_3 \rtimes \mathbb Z_4$ (the group $G_1$), and  $D_{12}$ (the group $G_2$).
 But for the case of $A_4 $ (the group $G_6$), we can not use the
argument as in Case 2c, since $p=2$. So  we now  separately deal with this case. Note that $A_4 \cong \mathbb (Z_2 \times \mathbb Z_2) \rtimes \mathbb Z_3$.
Here $H_1:=\mathbb Z_2 \times \mathbb Z_2$ is a subgroup of $A_4$ of order 4, and it has three nontrivial subgroups, say
$H_i$, $i=2,3,4$ each of order 2. Also $A_4$ has four subgroups of order 3, let them be $H_j$, $j=5,6,7,8$.
These eight subgroups are the only proper subgroups of $A_4$, so $|V(\Gamma_c(G))|\geq 5$.
Further,  $H_2$, $H_3$ and  $H_4$  permutes with each other, but no two subgroups $H_5$, $H_6$, $H_7$, $H_8$ permutes; for if they permutes, then $G$
has a subgroup of order 9, which is not possible.
Also, no $H_i$ $(i = 2,3, 4)$ permutes with $H_j$ $(j=5,6,7,8)$; for if they permutes, then $G$ has a subgroup of order 6, which is not
possible. Thus,
\begin{equation}\label{e4}
 \Gamma_c(G_6)\cong K_3\cup\overline{K}_4.
\end{equation}
The proof follows by combining all the cases.
\end{proof}

\begin{pro}\label{ct8}
 If $G$ is a non-abelian group of order $p^\alpha q$, where $p$, $q$ are two distinct primes with $\alpha\geq 3$, then
$\Gamma_c(G)$ has $C_3$ and $K_{1,3}$ as proper subgraphs; it has at least five vertices.
\end{pro}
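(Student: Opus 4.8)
The plan is to reduce everything to the structure of a Sylow $p$-subgroup $P$ of $G$, which has order $p^\alpha$ with $\alpha\geq 3$, and to invoke Theorem~\ref{ct1} so that any subgraph located inside $\Gamma_c(P)$ is automatically a subgraph of $\Gamma_c(G)$. First I would dispose of the case when $P$ is non-cyclic. If $P$ is non-abelian, then Proposition~\ref{ct5} already yields $C_3$ and $K_{1,3}$ as subgraphs of $\Gamma_c(P)$, with at least five vertices unless $P\cong Q_8$, and these pass to $\Gamma_c(G)$. If $P$ is abelian but not cyclic, then since $\alpha\geq 3$ it contains a copy of $\mathbb Z_p\times\mathbb Z_p\times\mathbb Z_p$ or $\mathbb Z_{p^2}\times\mathbb Z_p$, so by Subcase 2a in the proof of Proposition~\ref{ct2} its permutability graph contains $K_5$; hence $\Gamma_c(G)$ contains $C_3$, $K_{1,3}$ and at least five vertices. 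In the one place where $\Gamma_c(P)$ might have merely four vertices, namely $P\cong Q_8$, a Sylow $q$-subgroup of $G$ supplies a fifth vertex since $|G|>|P|$.

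The substantive case is when $P\cong\mathbb Z_{p^\alpha}$ is cyclic. Here the subgroups of $P$ of orders $p,p^2,\ldots,p^\alpha$ are $\alpha$ proper cyclic subgroups of $G$ that pairwise permute, so they induce $K_\alpha$ in $\Gamma_c(G)$. When $\alpha\geq 4$ this $K_\alpha$ already contains $K_4$, hence both $C_3$ and $K_{1,3}$, and together with a Sylow $q$-subgroup it provides at least five vertices, finishing these cases. The hard part is the boundary case $\alpha=3$, that is $|G|=p^3q$ with cyclic Sylow $p$-subgroup $P=\langle a\rangle$, because now $K_\alpha=K_3$ is too small to contain a claw and has only three vertices. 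The subgroups $\langle a^{p^2}\rangle$, $\langle a^p\rangle$, $\langle a\rangle$ still give $C_3$, so the remaining task is to produce a claw and two further vertices.

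For this I would use that $G$ is solvable (Burnside's $p^a q^b$-theorem) and pass to a minimal normal subgroup $M$, which is elementary abelian and hence isomorphic to $\mathbb Z_q$ or $\mathbb Z_p$. If $M\cong\mathbb Z_q$, then a Sylow $q$-subgroup $Q=M$ is normal and so permutes with every subgroup; taking $Q$ as the centre and the three subgroups of $P$ as leaves gives $K_{1,3}$, while the non-normality of $P$ (forced by $G$ being non-abelian, as otherwise $G\cong P\times Q$ would be cyclic) yields a second Sylow $p$-subgroup and thus a fifth vertex. If $M\cong\mathbb Z_p$, then $M$ must be the unique subgroup of order $p$ of the cyclic group $P$, namely $\langle a^{p^2}\rangle$, which is therefore normal; centring the claw at $\langle a^{p^2}\rangle$ with leaves $\langle a^p\rangle$, $\langle a\rangle$ and a Sylow $q$-subgroup gives $K_{1,3}$, and a fifth vertex comes either from a second Sylow $q$-subgroup when $Q$ is non-normal, or from the cyclic subgroup $\langle a^{p^2}\rangle Q\cong\mathbb Z_{pq}$ when $Q$ is normal.

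I expect this last case, the cyclic Sylow $p$-subgroup with $\alpha=3$, to be the main obstacle: here the reduction to a smaller subgroup via Theorem~\ref{ct1} breaks down, and one is forced to exploit the global arithmetic of $G$---normality of a Sylow subgroup or of the socle, together with the resulting multiplicity of Sylow subgroups---to manufacture both the claw and the fifth vertex. Once the claw is found and the bound $|V(\Gamma_c(G))|\geq 5$ is established, the copies of $C_3$ and $K_{1,3}$ are automatically proper subgraphs, completing the proof.
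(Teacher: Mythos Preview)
Your argument is correct, and it takes a genuinely different route from the paper in the delicate case $\alpha=3$ with cyclic Sylow $p$-subgroup.

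The paper organizes the $\alpha=3$ case arithmetically: it splits on $p>q$ versus $p<q$ (treating $(p,q)=(2,3)$ separately, where $G\cong S_4$), uses Sylow counting to pin down which Sylow subgroup is normal, writes the group as an explicit semidirect product with a concrete presentation, and then lists five named cyclic subgroups by hand to exhibit the $K_4$ (or the $C_3$ and $K_{1,3}$) and the fifth vertex. For $\alpha\geq 4$ it passes to a subgroup of order $p^4$ and argues as you do.

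Your organization is structural rather than arithmetic: you split on the isomorphism type of $P$, and in the only hard case (cyclic $P$, $\alpha=3$) you invoke solvability to obtain a minimal normal subgroup $M$, observe that $M$ must be $\mathbb Z_q$ or $\mathbb Z_p$ (the latter forced to be the unique order-$p$ subgroup of the cyclic $P$), and center the claw at $M$. This bypasses the $p>q$ / $p<q$ dichotomy and the explicit presentations entirely, and handles $(p,q)=(2,3)$ without special treatment. The paper's approach has the virtue of being entirely concrete---one can read off the subgroups---while yours is shorter and more uniform. One small point worth making explicit in your write-up: the reason $M$ cannot be $\mathbb Z_p\times\mathbb Z_p$ or $\mathbb Z_p^{\,3}$ is precisely that you are already in the case where $P$ is cyclic, so any normal $p$-subgroup of $G$ sits inside $P$ and is therefore cyclic; you use this implicitly but it deserves a sentence.
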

\begin{proof}
 Let $P$ denote a Sylow $p$-subgroup of $G$. We first prove this result
for $\alpha=3$. If $p>q$, then $n_p=1$, by Sylow's Theorem and our
group $G\cong P\rtimes \mathbb Z_q$. Suppose $\Gamma_c(P)$ contains $C_3$ and $K_{1,3}$; $|V(\Gamma_c(G))|\geq 5$, then
$\Gamma_c(G)$ also has the same. So by Propositions~\ref{ct2} and \ref{ct5}, the only possibilities are
$P\cong\mathbb Z_{p^3}$ or $Q_8$. If $P\cong \mathbb Z_{p^3}$, then $G\cong \mathbb Z_{p^3}\rtimes \mathbb Z_q=\langle a,b~|~a^{p^3}=q=1, bab^{-1}=a^i,
ord_{p^3}(i)=q\rangle$ and it has $H_1:=\langle a\rangle$, $H_2:=\langle
 a^p\rangle$ ,$H_3:=\langle a^{p^2}\rangle$, $H_4:=\langle b\rangle$, $H_5:=\langle ab\rangle$ as its proper cyclic subgroups, so $|V(\Gamma_c(G))|\geq 5$.
Here $H_1$, $H_2$, $H_3$
are normal in $G$, so they permutes with all the subgroups of $G$. It follows that $\Gamma_c(G)$ contains $K_4$ as a proper subgraph.
If $P\cong Q_8$, then by~\eqref{e2}, $\Gamma_c(P)
\cong K_4$. But this $K_4$ is a proper subgraph of $\Gamma_c(G)$, since $G$ has a cyclic subgroup isomorphic to $\mathbb Z_q$, in addition and
so $|V(\Gamma_c(G))|\geq 5$.

Now, let us  consider the case $p< q$ and $(p,q)\neq (2, 3)$. Here $n_{q} = p$ is not possible. If $n_{q}= p^{2}$, then
 $q~|~(p+1)(p-1)$ which implies that $q| (p+1)$ or $q~|~(p-1)$. But this is impossible, since $q>p>2$. If $n_{q}= p^3$,
 then there are $p^{3}(q-1)$ elements of order $q$. But this only leaves $p^{3}q-p^{3}(q-1)= p^3$ elements, and the
Sylow $p$-subgroup must be normal, a case we already considered. Therefore, the only remaining possibility is that
$G\cong \mathbb Z_q \rtimes P$. Suppose $\Gamma_c(P)$ contains $C_3$ and $K_{1,3}$; $|V(\Gamma_c(P))|\geq 5$, then
$\Gamma_c(G)$ also has the same. So by Propositions~\ref{ct2} and \ref{ct5}, we have the only possibilities $P\cong\mathbb Z_{p^3}$ or
$Q_8$. If $P\cong \mathbb Z_{p^3}$, then $G\cong \mathbb Z_q\rtimes \mathbb Z_{p^3}=\langle a,b~|~a^q=b^{p^3}=1, bab^{-1}=a^i, ord_{q}(i)=p^3\rangle$
and it has $H_1:=\langle a\rangle$, $H_2:=\langle b\rangle$, $H_3:=\langle b^p\rangle$,
 $H_4:=\langle b^{p^2}\rangle$, $H_5:=\langle ab^p\rangle$ as its proper cyclic subgroups, so $|V(\Gamma_c(G))|\geq 5$.
Here $H_1$, $H_5$ are normal in $G$, so they
permutes with all the subgroups of $G$; $H_3$ is a subgroups of $H_2$. So $K_4$ is a subgraph of $\Gamma_c(G)$ induced by
$H_1$, $H_2$, $H_3$, $H_4$. The case $P\cong Q_8$ is similar to the earlier case.

If $(p,q)=(2,3)$, then $G\cong S_4$ and it has a subgroup $H$ isomorphic to $D_8$. Therefore, by
Theorem~\ref{ct5},
$\Gamma_c(H)$ contains $C_3$ and $K_{1,3}$ as proper subgraphs. Also $H$ has more than four cyclic subgroups, so $\Gamma_c(G)$ also has the same properties.

If $\alpha\geq4$, then $G$ has a subgroup, say $H$ of order $p^4$.
Suppose $\Gamma_c(H)$ contains $C_3$ and $K_{1,3}$; also $|V(\Gamma_c(H))|\geq 5$, then
$\Gamma_c(G)$ also has the same properties. So by Propositions~\ref{ct2} and \ref{ct5}, we need to check when $H\cong\mathbb Z_{p^4}$. If $H\cong \mathbb Z_{p^4}$,
then by~\eqref{e1}, $\Gamma_c(H)\cong K_3$, so $H$ together with its subgroups forms $K_4$ as a subgraph of $\Gamma_c(G)$. Also $|V(\Gamma_c(G))|\geq 5$,
since $G$ has a subgroup of order $q$ in addition.
\end{proof}

\begin{pro}\label{ct9}
 If $G$ is a non-abelian group of order $p^2q^2$, where $p,q$ are two distinct primes, then
$\Gamma_c(G)$ contains $C_3$ and $K_{1,3}$ as proper subgraphs; it has at least five vertices.
\end{pro}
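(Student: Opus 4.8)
The plan is to follow the reduction strategy already used in Propositions~\ref{ct7} and \ref{ct8}: combine the two understood building blocks, namely the identity \eqref{e1} giving $\Gamma_c(\mathbb{Z}_r\times\mathbb{Z}_r)\cong K_{r+1}$ and the fact that a normal cyclic subgroup permutes with every other cyclic subgroup, and then lift everything to $G$ through Theorem~\ref{ct1}. Write $|G|=p^2q^2$ with $p<q$ (the statement is symmetric in $p,q$). The first step is a Sylow count to locate a normal Sylow subgroup: since $n_q\mid p^2$ and $n_q\equiv1\pmod q$ we get $n_q\in\{1,p^2\}$, and $n_q=p^2$ forces $q\mid(p-1)(p+1)$; as $q>p$ this can only happen when $(p,q)=(2,3)$. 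Thus, apart from the single order-$36$ case, the Sylow $q$-subgroup $Q$ (order $q^2$, hence abelian) is normal, so $G=Q\rtimes P$ with $|P|=p^2$, and $q\geq3$.

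With a normal Sylow $q$-subgroup in hand I would split on the isomorphism type of $Q$. If $Q\cong\mathbb{Z}_q\times\mathbb{Z}_q$, then since $q\geq3$ equation \eqref{e1} gives $\Gamma_c(Q)\cong K_{q+1}$ with $q+1\geq4$, and $K_{q+1}$ contains both $C_3$ and $K_{1,3}$ as subgraphs; because $G$ has cyclic subgroups outside $Q$ (for instance one of order $p$ inside $P$), Theorem~\ref{ct1} shows $\Gamma_c(G)$ has at least five vertices and that these copies of $C_3$ and $K_{1,3}$ are proper. If instead $Q\cong\mathbb{Z}_{q^2}$, then $Q$ and its unique subgroup $N_0$ of order $q$ are both normal cyclic subgroups of $G$, so each permutes with every cyclic subgroup of $G$. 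Taking $N_0$ as the centre, any three further proper cyclic subgroups form a $K_{1,3}$; and choosing any cyclic subgroup $M$ of non-prime order together with a proper subgroup of it yields a nested (hence permuting) pair which, with $N_0$ adjoined, gives a $C_3$. Hence the whole problem in this branch reduces to producing at least five proper cyclic subgroups, which is a short explicit count using the subgroups coming from $Q$ and from $P$.

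The only remaining case is $(p,q)=(2,3)$, i.e.\ $|G|=36$, and this is the step I expect to be the main obstacle, because here a Sylow subgroup need not be normal and $G$ may contain a copy of $A_4$, whose graph $\Gamma_c(A_4)\cong K_3\cup\overline{K}_4$ is $K_{1,3}$-free by \eqref{e4}; so an embedded $A_4$ does not by itself supply the required claw. I would argue directly from the Sylow $3$-subgroup of order $9$. If it is $\mathbb{Z}_3\times\mathbb{Z}_3$ we are back in the elementary-abelian branch and finish by \eqref{e1}. If it is $\mathbb{Z}_9$, it suffices to exhibit a normal cyclic subgroup and rerun the star/triangle argument of the previous paragraph: when the Sylow $3$-subgroup is normal its subgroup of order $3$ works, and when $n_3=4$ the action of $G$ on the four Sylow $3$-subgroups has image $A_4$ and kernel of order $3$, producing a normal cyclic subgroup $N_0$ of order $3$; since $C_G(N_0)$ has index at most $2$ it contains an element of order $2$, whence $G$ has a cyclic subgroup of order $6$ through which the $C_3$ is built.

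Finally I would verify the uniform vertex bound. For each configuration above one lists the proper cyclic subgroups explicitly and checks there are always at least five, so that the exhibited $C_3$ (three vertices) and $K_{1,3}$ (four vertices) are genuinely proper subgraphs of $\Gamma_c(G)$. This, together with Lemma~\ref{permutability cyclic l2} and Theorem~\ref{ct1} for passing subgraphs up from Sylow subgroups, completes the argument; the delicate point throughout is isolating the order-$36$ groups containing $A_4$ and supplying their claw from the ambient cyclic subgroups rather than from the $A_4$ itself.
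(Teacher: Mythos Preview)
Your argument is correct and leads to a complete proof, but it is organised differently from the paper's. The paper fixes $p>q$, invokes the explicit classification of groups of order $p^2q^2$ due to Lin, and runs through the concrete semidirect products $\mathbb{Z}_{p^2}\rtimes\mathbb{Z}_{q^2}$, $\mathbb{Z}_{p^2}\rtimes(\mathbb{Z}_q\times\mathbb{Z}_q)$, $(\mathbb{Z}_p\times\mathbb{Z}_p)\rtimes\mathbb{Z}_{q^2}$, $(\mathbb{Z}_p\times\mathbb{Z}_p)\rtimes(\mathbb{Z}_q\times\mathbb{Z}_q)$, exhibiting five named cyclic subgroups in each; for $|G|=36$ it produces a normal subgroup $K$ of order~$3$ via the index argument $|G|\nmid[G:P]!$, forms a subgroup $QK$ of order~$12$, and appeals to Proposition~\ref{ct7}. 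Your route avoids the classification entirely and instead splits only on whether the normal Sylow $q$-subgroup is elementary abelian (finished instantly by~\eqref{e1}) or cyclic (finished by the two normal cyclic subgroups $Q$ and $N_0$); for $|G|=36$ you split on the type of the Sylow $3$-subgroup and, when it is $\mathbb{Z}_9$ with $n_3=4$, use the conjugation action on the Sylow subgroups to locate a normal $\mathbb{Z}_3$. Your approach is more structural and portable, and it also handles the $n_3=4$ case at order~$36$ (e.g.\ $\mathbb{Z}_3\times A_4$) more transparently than the paper's write-up does.

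One small point to tighten: in the branch $Q\cong\mathbb{Z}_{q^2}$, your recipe for the $C_3$ (``choose $M$ of non-prime order, a proper subgroup of it, and adjoin $N_0$'') degenerates when the only available $M$ is $Q$ itself, since its unique nontrivial proper subgroup \emph{is} $N_0$. The cleaner statement is that $Q$ and $N_0$ are both normal cyclic, so $\{Q,N_0,R\}$ is already a triangle for any third proper cyclic subgroup $R$. Similarly, your five-vertex count in that branch needs the non-abelianness of $G$ (to force $n_p>1$ and hence extra cyclic subgroups beyond those lying in a single $P$ and $Q$); this should be made explicit rather than left as ``a short explicit count''.
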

\begin{proof}
 We use the classification of groups of order $p^2q^2$ given in \cite{lin}.
Let $P$ and $Q$ denote a Sylow $p,q$-subgroups of $G$ respectively. Without loss of generality, we assume that $p>q$.
By Sylow's Theorem, $n_p=1, q, q^2$. But $n_p= q$ is not possible,
since $p> q$. If $n_p= q^2$, then $p~|~(q+1)(q-1)$, this implies that $p~|~(q+1)$, which is true only
 when $(p, q)= (3, 2)$.

\noindent When $(p, q) \neq (3, 2)$, then $G \cong P \rtimes Q$. Now we have the following possibilities.

If $G \cong \mathbb Z_{p^2}\rtimes\mathbb Z_{q^2}=
\langle a,b~|~a^{p^2}=b^{q^2}=1,bab^{-1}=a^i,i^{q^2}\equiv 1~(\text{mod}~p^2)\rangle$, then $H_1:=\langle a\rangle$,
 $H_2:=\langle a^p\rangle$, $H_3:=\langle b\rangle$, $H_4:=\langle b^q\rangle$, $H_5:=\langle ab\rangle$ are proper cyclic subgroups of $G$,
so $|V(\Gamma_c(G))|\geq 5$.
Here $H_1$, $H_2$ are normal in $G$; $H_3$, $H_4$ permutes with each other. So $K_4$ is a proper subgraph of $\Gamma_c(G)$
induced by $H_1$, $H_2$, $H_3$, $H_4$.

If $G \cong \mathbb Z_{p^2} \rtimes (\mathbb Z_q \times \mathbb Z_q)$, then $H_1:=\langle a\rangle$,
 $H_2:=\langle a^p\rangle$, $H_3:=\langle b\rangle$, $H_4:=\langle c\rangle$, $H_5:=\langle bc\rangle$ are proper cyclic subgroups of $G$,
so $|V(\Gamma_c(G))|\geq 5$.
Here $H_1$ is a normal subgroup of $G$; $H_3$, $H_4$, $H_5$ permutes with each other. So $K_4$ is a proper
subgraph of $\Gamma_c(G)$ induced by $H_i$, $i=1,3,4,5$.

If $G \cong (\mathbb Z_p \times \mathbb Z_p) \rtimes \mathbb Z_{q^2}$ or $(\mathbb Z_p \times Z_p) \rtimes (\mathbb Z_q \times \mathbb Z_q )$,
 then $\mathbb Z_p \times \mathbb Z_p$ is a subgroup of $G$. Since $p>2$, so by \eqref{e1}, $\Gamma_c(G)$ contains $K_4$ as a proper subgraph and
so $|V(\Gamma_c(G))|\geq 5$.

Next, we consider the case when $(p, q)=(3,2)$ and $n_p= 1$. Consider the  Sylow 3-subgroup $P$ and a Sylow 2-subgroup $Q$ of $G$.
Let $H$ be a subgroup of $Q$ of order $2$. Since $|G|$ does not divide $[G: P]!$, so $P$ contains a subgroup, say
 $K$ of order $3$, which is normal in $G$; $H_1:=QK$ is a subgroup of order $12$.
Suppose $\Gamma_c(H)$ contains $C_3$ and $K_{1,3}$; also $|V(\Gamma_c(H))|\geq 5$, then
$\Gamma_c(G)$ also has the same. So by Propositions~\ref{ct2} and \ref{ct7}, the only cases remains to check is when $H\cong\mathbb Z_{p^2q}$ or $A_4$.
If $H_1\cong \mathbb Z_{p^2q}$, then by~\eqref{e1}, $\Gamma_c(H_1)\cong K_4$, so $H$ together with its subgroups forms $K_5$ as a proper subgraph of
$\Gamma_c(G)$ and so $|V(\Gamma_c(G))|\geq 5$. If $H_1\cong A_4$, then by~\eqref{e4}, $\Gamma_c(H_1)\cong K_3\cup \overline{K}_4$, so $|V(\Gamma_c(G))|\geq 5$. Also $K_{1,3}$ is a subgraph of $\Gamma_c(G)$ with bipartition $X:=\{K\}$ and $Y:=\{K_1, K_2, K_3\}$, where $K_i$'s are the vertices of $K_3$ in $\Gamma_c(H_1)$.
\end{proof}

\begin{pro}\label{ct10}
 If $G$ is a non-abelian group of order $p^{\alpha}q^{\beta}$, where $p,q$ are distinct primes, and $\alpha$, $\beta\geq2$, then
$\Gamma_c(G)$ has $C_3$ and $K_{1,3}$ as proper subgraphs; it has at least five vertices.
\end{pro}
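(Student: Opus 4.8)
The plan is to reduce to orders already classified and, when no convenient proper subgroup is available, to exploit a normal cyclic subgroup directly, using the standing fact that a \emph{normal} cyclic subgroup permutes with every subgroup and hence is adjacent in $\Gamma_c(G)$ to all other vertices. By symmetry I assume $\alpha\ge\beta$. If $\alpha=\beta=2$ then $|G|=p^2q^2$ and the assertion is exactly Proposition~\ref{ct9}, so I may assume $\alpha\ge 3$ (and $\beta\ge2$); in particular $\alpha+\beta\ge5$. Throughout I use Theorem~\ref{ct1}: if $H\le G$ then $\Gamma_c(H)$ is a subgraph of $\Gamma_c(G)$. Since $C_3$ and $K_{1,3}$ have fewer than five vertices, once $\Gamma_c(G)$ is shown to have at least five vertices, any copy of $C_3$ or $K_{1,3}$ it contains is automatically a \emph{proper} subgraph; thus it suffices to exhibit a triangle, a claw, and five vertices.

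First I would dispose of the cases where a Sylow subgroup already does all the work. Let $P$ and $Q$ be Sylow $p$- and $q$-subgroups. If either is non-abelian, it has order at least the cube of its prime, so Proposition~\ref{ct5} gives $C_3$, $K_{1,3}$ and five vertices inside its permutability graph, and Theorem~\ref{ct1} finishes. If some Sylow subgroup is abelian and non-cyclic of order at least the cube of its prime (this covers a non-cyclic $P$, as $\alpha\ge3$), then by the argument of Proposition~\ref{ct2} (Subcase 2a) it contains $\mathbb Z_{r^2}\times\mathbb Z_r$ or $\mathbb Z_r\times\mathbb Z_r\times\mathbb Z_r$, whence $K_5\subseteq\Gamma_c(G)$. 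If some Sylow subgroup is $\cong\mathbb Z_r\times\mathbb Z_r$ with $r$ odd, then \eqref{e1} gives $K_{r+1}\supseteq K_4$ inside $\Gamma_c(G)$, which already contains $C_3$ and $K_{1,3}$, and a nontrivial cyclic subgroup of the other Sylow subgroup supplies the fifth vertex.

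After these reductions $P\cong\mathbb Z_{p^\alpha}$ is cyclic, and $Q$ is either cyclic or $Q\cong\mathbb Z_2\times\mathbb Z_2$ with $q=2$. If both $P=\langle a\rangle$ and $Q=\langle b\rangle$ are cyclic, then all Sylow subgroups of $G$ are cyclic, so $G$ is metacyclic and splits as $G=A\rtimes B$ with $A$ a normal cyclic Sylow subgroup and $B$ cyclic. Every subgroup of $A$ is characteristic in $A$, hence normal in $G$; so the nontrivial subgroups of $A$ together with those of $B$ give $\alpha+\beta\ge5$ vertices, any two subgroups of $A$ together with a subgroup of $B$ form a triangle, and the normal subgroup $A$ is adjacent to all other vertices, yielding the claw.

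The genuinely delicate case is $q=2$, $Q\cong\mathbb Z_2\times\mathbb Z_2$ and $P\cong\mathbb Z_{p^\alpha}$ with $\alpha\ge3$: here the non-cyclic Sylow subgroup only produces a $C_3$ (not a $K_4$), and $P$ need not be normal (the obstruction is exactly the $A_4$-type behaviour $n_p>1$). The key step is to manufacture a \emph{large normal cyclic} $p$-subgroup. Let $O_p(G)$ be the largest normal $p$-subgroup; it is cyclic, being contained in $P$. In $\overline G=G/O_p(G)$ one has $O_p(\overline G)=1$, so the Fitting subgroup $F(\overline G)=O_2(\overline G)$ is a nontrivial normal $2$-subgroup of $\overline G$ inside $\mathbb Z_2\times\mathbb Z_2$, and the image of $P$ acts faithfully on it because $C_{\overline G}(F(\overline G))\le F(\overline G)$ is a $2$-group. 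Since $\mathrm{Aut}(\mathbb Z_2\times\mathbb Z_2)\cong S_3$ has a Sylow $p$-subgroup of order at most $p$, the image of $P$ has order at most $p$, so $|O_p(G)|\ge p^{\alpha-1}\ge p^2$. Then $O_p(G)$ has at least two nontrivial subgroups, each normal in $G$; these two together with an order-$2$ subgroup of $Q$ form a triangle, $O_p(G)$ is adjacent to all three order-$2$ subgroups of $Q$ giving the claw, and counting the subgroups of $O_p(G)$ and of $Q$ gives at least five vertices. I expect this Fitting-subgroup step to be the main obstacle, as it is the only place where the lack of a proper subgroup of previously classified order forces a direct structural argument.
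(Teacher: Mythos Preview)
Your argument is correct in substance but takes a genuinely different route from the paper. The paper's proof is a short induction on $\alpha+\beta$: Burnside's $p^aq^b$ theorem makes $G$ solvable, so $G$ has a subgroup $H$ of prime index (say $q$, by symmetry); the base case $\alpha=\beta=2$ is Proposition~\ref{ct9}, and for $\alpha+\beta>4$ the subgroup $H$ of order $p^\alpha q^{\beta-1}$ is handled either by Proposition~\ref{ct2} (if abelian), by Proposition~\ref{ct8} (if $\beta=2$, forcing $\alpha\ge 3$), or by the inductive hypothesis (if $\beta>2$). Your approach instead performs a direct structural case split on the Sylow subgroups, culminating in the $Z$-group structure theorem when both Sylows are cyclic and a pleasant Fitting-subgroup bound $|O_p(G)|\ge p^{\alpha-1}$ when $Q\cong\mathbb Z_2\times\mathbb Z_2$. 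The paper's induction is shorter and more uniform; your analysis is more explicit and yields finer structural information about where the triangle and claw actually live, at the cost of invoking the $Z$-group classification and the inequality $C_{\overline G}(F(\overline G))\le F(\overline G)$.

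One small slip to fix: in your Sylow-non-abelian case you assert that Proposition~\ref{ct5} already supplies five vertices inside that Sylow's permutability graph. This fails precisely when the Sylow subgroup is $Q_8$, where $\Gamma_c(Q_8)\cong K_4$ has only four vertices. The repair is immediate---any nontrivial cyclic subgroup of the \emph{other} Sylow subgroup gives a fifth vertex of $\Gamma_c(G)$---but the sentence as written overstates what Proposition~\ref{ct5} provides.
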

\begin{proof}
 We prove the result by induction on $\alpha + \beta$. If $\alpha + \beta = 4$, then by Propositions~\ref{ct2} and~\ref{ct9}, the result is true in the case.
Assume that the result is true for all non-abelian groups  of order $p^mq^n$ with $m,n \geq 2$, and $m+n < \alpha + \beta$.
 We prove the result when $\alpha + \beta > 4$.  Since $G$ is solvable, $G$ has a subgroup $H$ of prime index, with out loss of generality, say $q$.
So $|H|= p^{\alpha}q^{\beta - 1}$. If $H$ is abelian, then by Proposition~\ref{ct2}, the result is true. If $H$ is non-abelian,
 then we have the following cases to consider:

\noindent \textbf{Case 1:} If $\beta = 2$, then $\alpha > 2$. So by Proposition~\ref{ct8}, the result is true for $\Gamma_c(H)$.

\noindent \textbf{Case 2:} If $\beta > 2$, then by induction hypothesis, the result is true for $\Gamma_c(H)$.

\noindent \textbf{Case 3:}  If $\alpha =2$, then $\beta > 2$. So by Case 2, the result is true for $\Gamma_c(H)$.

\noindent \textbf{Case 4:} If $\alpha >2$, then by induction hypothesis, the result is true for $\Gamma_c(H)$.

Then by Theorem~\ref{ct1}, result is true for $\Gamma_c(G)$ also.
\end{proof}

\begin{pro}\label{100}
Let $G$ be a finite group of order $p_1^{\alpha_1}p_2^{\alpha_2}\ldots p_k^{\alpha_k}$, $k\geq 3$, where $p_i$'s are distinct primes and $\alpha_i\geq 1$.
Then $\Gamma_c(G)$ contains $C_3$, $K_{1,3}$ and it has more than four vertices.
\end{pro}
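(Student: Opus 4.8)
The plan is to make Theorem~\ref{ct1} the workhorse: since every subgraph of $\Gamma_c(N)$ for $N\le G$ sits inside $\Gamma_c(G)$, it suffices to locate inside $G$ subgroups whose permutability graphs already display a triangle, a claw, and enough vertices. I would prove the three assertions largely independently, since the example $G\cong A_5$ shows that no single two-prime subgroup need carry both $C_3$ and $K_{1,3}$ at once. For the vertex count I would first use Cauchy to extract proper cyclic subgroups $A_1,A_2,A_3$ of orders $p_1,p_2,p_3$, and then split on nilpotency: if $G$ is nilpotent it is the direct product of its Sylow subgroups and hence contains cyclic subgroups of orders $p_ip_j$, giving well over four vertices; otherwise some Sylow $p_i$-subgroup is non-normal, so $n_{p_i}\ge p_i+1\ge 3$ yields at least three subgroups of order $p_i$, which together with $A_j,A_\ell$ again exceed four.

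For the triangle I would use two complementary constructions feeding into Theorem~\ref{ct1}. If some Sylow subgroup of $G$ is non-abelian, Proposition~\ref{ct5} already supplies $C_3$ (and $K_{1,3}$). If all Sylow subgroups are abelian but one is non-cyclic, it contains a copy of $\mathbb Z_p\times\mathbb Z_p$, whose $p+1\ge 3$ subgroups of order $p$ are pairwise permutable by \eqref{e1}, giving $C_3$. If every Sylow subgroup is cyclic, then $G$ is metacyclic and, since $|G|$ has at least three prime divisors, $G$ has a proper cyclic subgroup $N$ with $\tau(|N|)\ge 4$ (one of the two cyclic factors of the split metacyclic decomposition carries two primes); the nontrivial subgroups of $N$, including $N$ itself, pairwise permute and furnish a $K_3$. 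This casework also recovers $C_3$ whenever $G$ has a proper cyclic subgroup of order $p^3$ or $pq$.

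For the claw I would hunt for a cyclic vertex of degree at least three. The cleanest source is a proper nontrivial \emph{normal} cyclic subgroup $N$: being normal it permutes with every subgroup, so it is adjacent to all other vertices and, as there are at least four of them, it is the centre of a $K_{1,3}$. When $G$ has no such subgroup but has a normal Klein four subgroup $V$ centralized by an element $x$ of odd prime order, the three order-$2$ subgroups of $V$ each permute with $\langle x\rangle$, again giving $K_{1,3}$. In the remaining solvable cases I would instead exhibit a non-abelian subgroup of order $pq$ (whose graph is $K_{1,q}\supseteq K_{1,3}$ by \eqref{e3}) or a larger two-prime subgroup covered by Propositions~\ref{ct7}--\ref{ct10}, choosing the pair of primes so as to avoid the claw-free exceptions such as $A_4$.

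The hard part will be the claw in the non-solvable groups, and the genuine obstruction is $A_5$: up to isomorphism it is the only group whose order has at least three prime divisors in which every nontrivial element has prime order. Its triangle is still delivered by a Klein four subgroup as above, but it has no proper normal cyclic subgroup and its Klein four subgroups only produce $K_3$, so the claw needs a separate argument. For $A_5$ I would count directly: each involution $\langle t\rangle$ inverts the order-$3$ subgroup in each of the two copies of $S_3$ through $t$ and the order-$5$ subgroup in each of the two copies of $D_{10}$ through $t$, so $\langle t\rangle$ permutes with at least four cyclic subgroups and centres a $K_{1,3}$. Every other non-solvable $G$ with three prime divisors contains a subgroup with a non-abelian Sylow subgroup or a two-prime subgroup of the type handled in Propositions~\ref{ct5} and~\ref{ct8}, so Theorem~\ref{ct1} applies; I would also keep in mind the exceptional graph $\Gamma_c(A_4)\cong K_3\cup\overline{K}_4$ of \eqref{e4}, which contributes $C_3$ but is claw-free, so that in a group containing $A_4$ the claw must be produced from a different subgroup.
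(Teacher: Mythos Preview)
Your plan diverges substantially from the paper's argument. The paper splits on whether $|G|$ is squarefree: if every $\alpha_i=1$ then $G$ is solvable, and a Sylow basis $\{P_1,P_2,P_3\}$ supplies the triangle while a normal Sylow subgroup of largest prime order anchors the claw; if some $\alpha_i>1$, the paper picks a Sylow $p_1$-subgroup $P$ (casing on whether $P$ is non-abelian, cyclic, or non-cyclic abelian) together with an element $b$ of a second prime order, and invokes Propositions~\ref{ct2}--\ref{ct10} on the two-generated subgroup $\langle a,b\rangle$. Non-solvable groups are never isolated as a separate case. Your route---treating the vertex count, $C_3$ and $K_{1,3}$ independently and singling out $A_5$---is more structural and makes the genuine obstructions ($A_4$ and $A_5$) visible, whereas the paper's route is shorter but leans on the tacit assumption that $\langle a,b\rangle$ has only two prime divisors.

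Two points in your sketch need tightening. First, in the vertex count, $n_{p_i}\ge 3$ does not by itself give three distinct subgroups of order $p_i$ when $\alpha_i>1$: distinct Sylow $p_i$-subgroups may share their unique subgroup of order $p_i$, as in $SL_2(5)$, where $n_2=5$ but there is only one involution. This is easy to repair by handling $\alpha_i>1$ separately. Second, your assertion that ``every other non-solvable $G$ with three prime divisors contains a subgroup with a non-abelian Sylow subgroup or a two-prime subgroup of the type handled in Propositions~\ref{ct5} and~\ref{ct8}'' is stated but not argued, and this is exactly where the work lies. A non-solvable group cannot have cyclic Sylow $2$-subgroup (Burnside transfer plus Feit--Thompson), so either the Sylow $2$-subgroup is non-abelian (Proposition~\ref{ct5} gives $K_{1,3}$), or it is abelian non-cyclic of order $\ge 8$ (then it already contains $\ge 5$ pairwise-permuting proper cyclic subgroups), or it is $V_4$. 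In the last case your $A_5$-style count---an involution lying in several dihedral subgroups of $G$---does generalise, but you should argue this rather than assert it.
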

\begin{proof}
If $\alpha_i=1$, for every $i$, then $G$ is solvable. We consider the following cases:

\noindent \textbf{Case 1:} $k =3$.
If $\alpha_1=\alpha_2=\alpha_3=1$, then
without loss of generality, we assume that $p_1<p_2<p_3$. Since $G$ is solvable, it has a Sylow basis $\{P_1, P_2, P_3\}$,
where $P_i$ is the Sylow $p_i$-subgroup of $G$ for every $i=1,2,3$. Also $H_1:=\langle ab\rangle$ and $H_2:=\langle bc\rangle$ are proper cyclic
subgroups of $G$, where $a$, $b$, $c$ are generators of  $P_1$, $P_2$, $P_3$ respectively, so we have $|V(\Gamma_c(G))|\geq 5$.
Moreover, $P_1$, $P_2$, $P_3$ permutes with each other, so $\Gamma_c(G)$ contains $C_3$ as a proper subgraph. Further,
$G$ has a normal subgroup, say $N$ of order $p_3$, so
it follows that $\Gamma_c(G)$ contains $K_{1,3}$ as a subgraph with bipartition $X:=\{N\}$ and $Y:=\{P_1,P_2,H_1\}$.

 \noindent \textbf{Case 2:} $k >3$. Since $G$ is solvable, it has a Sylow basis containing $P_1$, $P_2$, $P_3$ , where $P_i$ is the
Sylow $p_i$-subgroup of $G$ for every $i=1,2,3$. Then $H:=P_1P_2P_3$ is a subgroup of $G$. So by
Proposition~\ref{ct2} and by Case 1 of this proof, $\Gamma_c(H)$ contains $C_3$ and $K_{1,3}$ as  subgraphs; also $|V(\Gamma_c(H))|\geq 5$.
It follows that  $\Gamma_c(G)$ also has the same properties.

If $\alpha_i>1$, for some $i$, then without loss of generality, we assume that $\alpha_1>1$. By Sylow's theorem, $G$ has a Sylow $p_1$-subgroup, say $P$ and
$G$ has an element, say $b$ of order $p_2$. If $P$ is non-abelian, then by Proposition~\ref{ct5}, $\Gamma_c(P)$ contains $C_3$, $K_{1,3}$ as a subgraph. By
Theorem~\ref{ct5}, taking the cyclic subgroups of $P$ together with $\langle b\rangle$, we have $|V(\Gamma_c(G))|\geq 5$.

If $P$ is abelian, then we consider the following cases:

\noindent\textbf{Case 3:} $P$ is cyclic. Let $P:=\langle a\rangle$. Now consider the subgroup $\langle a, b\rangle$ of $G$. Then by Propositions~\ref{ct2}, \ref{ct6},
\ref{ct7}, \ref{ct8}, \ref{ct9}, and \ref{ct10}, we have $\Gamma_c(\langle a, b\rangle)$ contains $C_3$, $K_{1,3}$. Also by Propositions~\ref{ct6},
\ref{ct7}, \ref{ct8}, \ref{ct9} and \ref{ct10}, taking cyclic subgroups of $\langle a,b\rangle$ together with $\langle a, b\rangle$, we have $|V(\Gamma_c(G))|\geq 5$.

\noindent\textbf{Case 4:} $P$ is non-cyclic. If $\alpha_1=2$, then $P\cong \mathbb Z_p\times \mathbb Z_p:=\langle a_1, a_2\rangle$.

\noindent\textbf{subcase 4a:} If $\langle a_1, a_2, b\rangle\ncong A_4$, then by Propositions~\ref{ct2}, \ref{ct6},
\ref{ct7}, \ref{ct8}, \ref{ct9} and \ref{ct10}, $\Gamma_c(\langle a_1, a_2, b\rangle)$ contains $C_3$, $K_{1,3}$ and $|V(\langle a_1, a_2, b\rangle)|\geq 5$.

\noindent\textbf{subcase 4b:} If $\langle a_1, a_2, b\rangle \cong A_4$. By \eqref{e4}, $\Gamma_c(\langle a_1, a_2, b\rangle)$ has $C_3$ as a subgraph.
Let $c$ be an element of $G$ of order $p_3$. If $\langle c\rangle$ permutes with
a cyclic subgroups of $\langle a_1, a_2\rangle$, then $\Gamma_c(\langle a_1, a_2\rangle)\cong C_3$. So $\langle c\rangle$ together with cyclic subgroups
of $\langle a_1, a_2\rangle$ forms $K_{1,3}$. If $\langle c\rangle$ doesnot permutes with a subgroups of $\langle a_1, a_2\rangle$, the by
Propositions~\ref{ct2}, \ref{ct6},
\ref{ct7}, \ref{ct8}, \ref{ct9} and \ref{ct10}, $\Gamma_c(\langle a_1, c\rangle)$ contains $K_{1,3}$ as a subgraph. Also $|V(\langle a_1, a_2, b\rangle)|\geq 5$, since
by \eqref{e4}. If $\alpha_1\geq 3$, then by Proposition~\ref{ct2}, the result is true for $\Gamma_c(P)$, so it is true for $\Gamma_c(G)$ also.

The proof follows by combining all these cases.
\end{proof}

\subsection{Main results for finite groups}\label{sec:7}
Combining all the results obtained so-far in this section, we have the following main results which are applications for group theory.

\begin{thm}\label{ct14}
 Let $G$ be a finite group and $p$, $q$ be distinct primes. Then
\begin{enumerate}[{\normalfont (i)}]
\item $\Gamma_c(G)$ is $C_3$-free if and only if $G$ is  one of $\mathbb Z_{p^{\alpha}}$ $(\alpha = 2,3)$, $\mathbb Z_{pq}$, $\mathbb Z_q\rtimes \mathbb Z_p$;
\item $\Gamma_c(G)$ is $C_n$ if and only if $n=3$ and $G$ is  either $\mathbb Z_{p^4}$ or $\mathbb Z_2\times \mathbb Z_2$;
\item $\Gamma_c(G)$ is $P_n$ if and only if $n=1$ and $G$ is  either $\mathbb Z_{p^3}$ or $\mathbb Z_{pq}$;
\item $\Gamma_c(G)$ is $K_4$ if and only if $G$ is  one of $\mathbb Z_{p^5}$, $\mathbb Z_{p^2q}$, $\mathbb Z_3\times \mathbb Z_3$, $Q_8$;
\item $\Gamma_c(G)$ is claw-free if and only if $G$ is  one of $\mathbb Z_{p^\alpha}$ $(\alpha=2,3,4)$, $\mathbb Z_{pq}$, $\mathbb Z_2 \times \mathbb Z_2$,
 $A_4$.
\end{enumerate}
\end{thm}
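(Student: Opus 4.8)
The plan is to deduce Theorem~\ref{ct14} by collating the case-by-case classifications already established in Propositions~\ref{ct2}, \ref{ct5}, \ref{ct6}, \ref{ct7}, \ref{ct8}, \ref{ct9}, \ref{ct10} and \ref{100}. First I would observe that these results are jointly exhaustive: any finite group $G$ possessing a proper cyclic subgroup is abelian (Proposition~\ref{ct2}), non-abelian of prime-power order $p^{\alpha}$ with $\alpha \geq 3$ (Proposition~\ref{ct5}), non-abelian of order $p^{\alpha}q^{\beta}$ for two distinct primes (Propositions~\ref{ct6}--\ref{ct10}, split according to the exponents), or non-abelian of order divisible by at least three distinct primes (Proposition~\ref{100}). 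Hence it suffices to read off, property by property, exactly which groups in this list qualify.

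The engine of the argument is the uniform conclusion of the non-abelian propositions: for all but three exceptional families, $\Gamma_c(G)$ contains $C_3$ and $K_{1,3}$ as proper subgraphs and has at least five vertices. Such a graph has a triangle together with more than three vertices (so it is not $C_3$-free, and, having a triangle yet more than three vertices, it is no cycle $C_n$), has a vertex of degree at least three (so it is no path $P_n$ and, containing a $K_{1,3}$ subgraph, is not claw-free), and has more than four vertices (so it is not $K_4$). Consequently the only non-abelian groups capable of satisfying one of the five properties are the three families whose graphs were computed explicitly: $Q_8$, with $\Gamma_c(Q_8) \cong K_4$ by \eqref{e2}; the non-abelian groups of order $pq$, with $\Gamma_c(G) \cong K_{1,q}$ where $q > p$ and hence $q \geq 3$, by \eqref{e3}; and $A_4$, with $\Gamma_c(A_4) \cong K_3 \cup \overline{K}_4$ by \eqref{e4}. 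I would treat these three graphs as the decisive test cases.

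Each part then follows by inspection of these pieces. For (i), the star $K_{1,q}$ is triangle-free, contributing $\mathbb Z_q \rtimes \mathbb Z_p$, whereas the triangles inside $K_4$ and inside $K_3\cup\overline{K}_4$ eliminate $Q_8$ and $A_4$; together with Proposition~\ref{ct2}(i) this gives the stated list. For (ii) and (iii), every non-abelian graph above is disqualified as a cycle or path ($K_{1,q}$ has a vertex of degree $q\geq 3$; $K_4$ and $K_3\cup\overline{K}_4$ contain triangles and are not $2$-regular; the generic case carries both $C_3$ and $K_{1,3}$), so only the abelian answers of Proposition~\ref{ct2}(iii),(iv) remain. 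For (iv), the five-vertex bound removes the generic non-abelian case and $A_4$ (seven vertices), and $K_{1,q}$ is never complete, leaving exactly $Q_8$ alongside the abelian groups $\mathbb Z_{p^5}$, $\mathbb Z_{p^2q}$, $\mathbb Z_3\times\mathbb Z_3$ of Proposition~\ref{ct2}(v). For (v), both $K_4$ (for $Q_8$) and $K_{1,q}$ (for order $pq$) contain a $K_{1,3}$ subgraph and so fail to be claw-free, while $\Gamma_c(A_4) \cong K_3\cup\overline{K}_4$ has maximum degree two and therefore is claw-free; adjoining $A_4$ to the abelian list of Proposition~\ref{ct2}(vi) produces the answer.

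The step demanding the most care is the bookkeeping in this reduction: I must verify that the union of the cited propositions genuinely covers every finite group and that the three exceptional families are the \emph{only} non-abelian groups escaping the ``$C_3$, $K_{1,3}$, and at least five vertices'' conclusion. In particular I would recheck the two-prime case, confirming that Propositions~\ref{ct6}, \ref{ct7}, \ref{ct8} dispose of orders $pq$, $p^2q$, and $p^{\alpha}q$ with $\alpha\geq 3$ (and their symmetric counterparts), while Proposition~\ref{ct10} handles $\alpha,\beta\geq 2$, so that no order is left out. I would also be careful to read ``$K_{1,3}$-free'' throughout as the absence of a $K_{1,3}$ \emph{subgraph}, in agreement with how the propositions assert non-claw-freeness; under this convention a complete graph $K_n$ with $n\geq 4$ is not claw-free, which is precisely what places $Q_8$, $\mathbb Z_{p^5}$, and $\mathbb Z_{p^2q}$ outside the part~(v) list.
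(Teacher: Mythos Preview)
Your proposal is correct and follows exactly the paper's approach: the paper simply states that Theorem~\ref{ct14} is obtained by ``combining all the results obtained so far in this section'', i.e.\ Propositions~\ref{ct2}--\ref{100}, and you have written out that synthesis in detail. Your explicit bookkeeping (verifying exhaustiveness of the case split, isolating $Q_8$, $\mathbb Z_q\rtimes\mathbb Z_p$, and $A_4$ as the only non-abelian exceptions, and flagging that ``claw-free'' must be read as forbidding $K_{1,3}$ as a subgraph rather than an induced subgraph to match the propositions and the cited result from \cite{raj}) is entirely consistent with the paper's intent.
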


\begin{cor}\label{pc15}
 Let $G$ be a finite group and $p$, $q$ are distinct primes.
\begin{enumerate}[{\normalfont (i)}]
\item The following are equivalent:
\begin{enumerate}[{\normalfont (a)}]
\item $\Gamma_c(G)$ is $C_3$-free;
\item $\Gamma_c(G)$ is bipartite;
\item $\Gamma_c(G)$ is complete bipartite;
\item $\Gamma_c(G)$ is tree;
\item $\Gamma_c(G)$ is star graph.
\end{enumerate}
\item $\Gamma_c(G)$ is $P_2$-free if and only if $G$ is  either $\mathbb Z_{p^{\alpha}}$ $(\alpha = 2,3)$ or $\mathbb Z_{pq}$.
\item $girth(\Gamma_c(G))$ is infinity if $G$ is  one of $\mathbb Z_{p^{\alpha}}$ $(\alpha = 2,3)$, $\mathbb Z_{pq}$ or $\mathbb Z_q\rtimes \mathbb Z_p$;
otherwise $girth(\Gamma_c(G))=3$.
\end{enumerate}
\end{cor}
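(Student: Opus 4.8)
The plan is to read all three parts off Theorem~\ref{ct14}(i), using its list $\mathbb Z_{p^{\alpha}}$ $(\alpha=2,3)$, $\mathbb Z_{pq}$, $\mathbb Z_q\rtimes\mathbb Z_p$ of $C_3$-free groups as a common hinge; the only extra ingredient I need is the exact isomorphism type of $\Gamma_c(G)$ for each of these four groups. By the proper-subgroup count \eqref{e5} for cyclic groups I get $\Gamma_c(\mathbb Z_{p^2})\cong K_1$ and $\Gamma_c(\mathbb Z_{p^3})\cong\Gamma_c(\mathbb Z_{pq})\cong K_2$, while by Proposition~\ref{ct6} (see \eqref{e3}) $\Gamma_c(\mathbb Z_q\rtimes\mathbb Z_p)\cong K_{1,q}$. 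Thus in every $C_3$-free case $\Gamma_c(G)$ is a star $K_{1,n}$ with $n\in\{0,1,q\}$, and this single observation will drive everything.

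For part (i) the substance reduces to one implication. Each of (b)--(e) forces $C_3$-freeness for routine reasons: a complete bipartite graph is bipartite, a bipartite graph has no odd cycle and hence no triangle, a tree has no cycle at all, and a star is a tree. Hence (b), (c), (d) and (e) each imply (a). For the reverse direction I would invoke Theorem~\ref{ct14}(i): if $\Gamma_c(G)$ is $C_3$-free then $G$ is one of the four groups above, whence by the first paragraph $\Gamma_c(G)$ is a star graph, giving (a)$\Rightarrow$(e). Since a star is at once a tree, complete bipartite and bipartite, (e) implies (b), (c) and (d), and the five conditions collapse to one.

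For part (ii) I would note that a graph containing no path of length two is triangle-free, hence $C_3$-free, so Theorem~\ref{ct14}(i) again restricts $G$ to the four types; of the graphs $K_1$, $K_2$, $K_{1,q}$ only the first two omit a $P_2$, while $K_{1,q}$ with $q\ge3$ contains the path leaf--centre--leaf, ruling out $\mathbb Z_q\rtimes\mathbb Z_p$ and leaving precisely $\mathbb Z_{p^2}$, $\mathbb Z_{p^3}$, $\mathbb Z_{pq}$; the converse is the immediate check that $K_1$ and $K_2$ have no $P_2$. For part (iii), $girth(\Gamma_c(G))=\infty$ exactly when $\Gamma_c(G)$ is acyclic: the four $C_3$-free groups give the trees $K_1$, $K_2$, $K_{1,q}$, so the girth is infinite there, whereas for every other admissible $G$ Theorem~\ref{ct14}(i) makes $\Gamma_c(G)$ non-$C_3$-free, so it contains a triangle, and as $3$ is the shortest possible cycle in a simple graph, $girth(\Gamma_c(G))=3$.

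I expect no real obstacle, since the arithmetic content already sits inside Theorem~\ref{ct14}(i); the care needed is bookkeeping --- reading off the four boundary graphs accurately and treating the degenerate stars $K_1=K_{1,0}$ and $K_2=K_{1,1}$ on the same footing as $K_{1,q}$ --- plus the one interpretive point that ``$P_2$-free'' in (ii) must be read as ``having no path of length two'', for otherwise a complete graph such as $\Gamma_c(\mathbb Z_{p^4})\cong K_3$ would spuriously satisfy it.
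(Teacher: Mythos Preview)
Your proposal is correct and follows essentially the same route as the paper: reduce everything to the list in Theorem~\ref{ct14}(i), then read off the explicit graphs via \eqref{e5} and \eqref{e3} to see that each $C_3$-free $\Gamma_c(G)$ is a star. Your organization of the implications in (i) (proving (a)$\Rightarrow$(e) directly and deducing the rest) is a minor rearrangement of the paper's chain (a)$\Leftrightarrow$(b)$\Leftrightarrow$(c) and (b)$\Leftrightarrow$(d)$\Leftrightarrow$(e), and your explicit treatment of the degenerate stars $K_{1,0}$, $K_{1,1}$ and the reading of ``$P_2$-free'' is, if anything, more careful than the original.
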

\begin{proof}
To classify the groups whose permutability graph is either bipartite or complete bipartite, it is enough to consider the groups whose
permutability graph of cyclic subgroups are $C_3$-free. By Theorem~\ref{ct14}(i) and \eqref{e5}, \eqref{e3}, we have $(a)\Leftrightarrow(b)
\Leftrightarrow (c)$.
Now, to classify the groups whose permutability graphs of cyclic subgroups is one of
tree, star graph or $P_2$-free, it is enough to
consider the groups whose permutability graphs of cyclic subgroups are bipartite. So by the above argument and
by Theorem~\ref{ct14}(i), \eqref{e5}, \eqref{e3}, we have $(b)\Leftrightarrow(d)\Leftrightarrow(e)$ and $\Gamma_c(G)$ is $P_2$-free if and only if
$\mathbb Z_{p^\alpha}(\alpha=2,3)$ or $\mathbb Z_{pq}$. This completes the proof of (i) and (ii).
The proof of (iii) follows by the part (i) of this corollary and by Theorem~\ref{ct14}(i).
%
%
%
\end{proof}

\begin{cor}\label{ct102}
Let $G$ be a finite group. Then $\Gamma_c(G)$ is totally disconnected if and only if $G\cong \mathbb Z_{p^2}$.
\end{cor}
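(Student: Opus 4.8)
The plan is to reduce the statement to the already established classification of $C_3$-free permutability graphs in Theorem~\ref{ct14}(i). The guiding observation is that a totally disconnected graph has no edges whatsoever, and in particular contains no triangle; hence if $\Gamma_c(G)$ is totally disconnected, then it is certainly $C_3$-free. By Theorem~\ref{ct14}(i), this forces $G$ to be one of $\mathbb Z_{p^\alpha}$ $(\alpha = 2,3)$, $\mathbb Z_{pq}$, or $\mathbb Z_q\rtimes \mathbb Z_p$, so only these four families remain to be examined. (One could equally invoke Corollary~\ref{pc15}(iii), since totally disconnectedness forces infinite girth, which singles out the same four groups.)

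For the forward implication I would then discard all but $\mathbb Z_{p^2}$ by exhibiting an edge in each of the remaining cases, using the graph structures already computed. By Proposition~\ref{ct2}(iv) we have $\Gamma_c(\mathbb Z_{p^3})\cong P_1$ and $\Gamma_c(\mathbb Z_{pq})\cong P_1$, each of which has one edge, and by Proposition~\ref{ct6} we have $\Gamma_c(\mathbb Z_q\rtimes \mathbb Z_p)\cong K_{1,q}$, which also contains edges. Thus none of these three groups yields a totally disconnected graph, leaving $G\cong \mathbb Z_{p^2}$ as the only surviving possibility.

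For the converse I would compute $\Gamma_c(\mathbb Z_{p^2})$ directly. Its only nontrivial proper cyclic subgroup is the unique subgroup of order $p$, so by \eqref{e5} (with $r=(2+1)-2=1$) we obtain $\Gamma_c(\mathbb Z_{p^2})\cong K_1$, a single isolated vertex, which is trivially totally disconnected. Combining the two directions establishes the equivalence.

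There is no substantial obstacle here, since the whole argument rests on Theorem~\ref{ct14}(i). The only point demanding a little care is that \emph{totally disconnected} is strictly stronger than \emph{$C_3$-free}: after the reduction one must genuinely eliminate three of the four surviving families rather than stopping at the $C_3$-free list, which is exactly why the explicit identification of each of the three excluded graphs as $P_1$ or $K_{1,q}$ (each carrying at least one edge) is required.
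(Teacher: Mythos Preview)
Your argument is correct and is in fact tidier than the paper's own proof. The paper does \emph{not} invoke Theorem~\ref{ct14}(i); instead it runs a direct case analysis on $|G|=p_1^{\alpha_1}\cdots p_k^{\alpha_k}$: when all $\alpha_i=1$ it uses solvability and a Sylow basis to exhibit a pair of permuting Sylow subgroups (hence an edge), and when some $\alpha_i>1$ it picks a Sylow $p_i$-subgroup $P$ and, according as $P\not\cong\mathbb Z_{p^2}$ or $P\cong\mathbb Z_{p^2}$, either appeals to Propositions~\ref{ct2} and~\ref{ct5} or notes that $P$ permutes with its subgroup of order $p$. Your route---observing that ``totally disconnected $\Rightarrow$ $C_3$-free'' and then discarding $\mathbb Z_{p^3}$, $\mathbb Z_{pq}$, $\mathbb Z_q\rtimes\mathbb Z_p$ via the explicit graphs $P_1$, $P_1$, $K_{1,q}$---reuses the classification already assembled in Theorem~\ref{ct14}(i) rather than partially redoing the underlying casework, which makes for a shorter and logically transparent proof. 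The paper's argument has the minor advantage of being slightly more self-contained (it only calls Propositions~\ref{ct2} and~\ref{ct5} rather than the full summary theorem), but since Corollary~\ref{ct102} sits after Theorem~\ref{ct14} anyway, your reduction is entirely legitimate and arguably preferable.
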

\begin{proof}
Let $|G|=p_1^{\alpha_1}p_2^{\alpha_2}\ldots p_k^{\alpha_k}$, where $p_i$'s are distinct primes, $k\geq 1$ and $\alpha_i\geq 1$. If
$\alpha_i=1$, for every $i$, then $G$ is solvable. Suppose $k=1$, then $G$ does not contains a proper subgroup. It follows that
$k\geq 2$ and so any two subgroups in Sylow basis of $G$ permutes with each other. Therefore, $\Gamma_c(G)$ is not totally disconnected.
If $\alpha_i>1$, for some $i$, then without loss of generality we
assume that $\alpha_1>1$ and so by Sylow's Theorem, $G$ has a Sylow $p_1$ subgroup, say $P$. Suppose $P\ncong \mathbb Z_{p^2}$, then
by Propositions~\ref{ct2} and \ref{ct5}, $\Gamma_c(G)$ is not totally disconnected. If $P\cong \mathbb Z_{p^2}$, then $P$ and its subgroup of order $p$
permutes with each other. Thus $\Gamma_c(G)$ is not totally disconnected.
%
%
\end{proof}

\begin{rem}Not every graph is a permutability graph of cyclic subgroups of some group. For example, by Theorem~\ref{ct14} (3), the graph $C_n$, $n \geq 4$ is not a permutability graph of cyclic subgroups of any group.
\end{rem}

\subsection{Infinite groups}\label{sec:8}
We now investigate the  of permutability graph of cyclic subgroups of infinite groups.
 It is well known that any infinite
group has infinite number of subgroups. Let $G$ be an infinite abelian group.
If $G$ is finitely generated, then by fundamental theorem of finitely generated abelian groups, $\mathbb Z$
is a subgroup of $G$. Since $\mathbb Z$ is cyclic, it follows that $\Gamma_c(\mathbb Z)$ contains $K_r$
as a proper subgraph for every positive integer $r$. Therefore, by Theorem~\ref{ct1}, $\Gamma_c(G)$ also has the same property.
If $G$ is not finitely generated, then we can take the cyclic groups generated by each generating element and so  $\Gamma_c(G)$ contains $K_r$ as a proper subgraph, for every positive integer $r$. Thus we have the following result.

\begin{thm}\label{ct17}
The permutability graph of cyclic subgroups of any infinite abelian group contains $K_r$ as a subgraph, for every positive integer $r$.
\end{thm}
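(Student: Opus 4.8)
The plan is to reduce the statement to a purely set-theoretic counting fact, exploiting that $G$ is abelian. Since in an abelian group $HK=KH$ holds for \emph{every} pair of subgroups $H,K$, any collection of proper cyclic subgroups pairwise permutes. Hence, by Lemma~\ref{permutability cyclic l2}, to show that $\Gamma_c(G)$ contains $K_r$ for every $r$ it suffices to produce, for each positive integer $r$, at least $r$ distinct proper cyclic subgroups of $G$; equivalently, it suffices to show that $G$ has infinitely many distinct proper cyclic subgroups. So the whole problem collapses to this counting statement.

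To establish it, I would split into two cases according to whether $G$ contains an element of infinite order. If $x\in G$ has infinite order, then $\langle x\rangle\cong\mathbb Z$, and the subgroups $\langle x^{2}\rangle,\langle x^{3}\rangle,\ldots$ are pairwise distinct (since $\langle x^{n}\rangle=\langle x^{m}\rangle$ forces $|n|=|m|$) and are already proper inside $\langle x\rangle$, hence proper in $G$; this yields infinitely many proper cyclic subgroups at once. This is exactly the excerpt's ``finitely generated'' case, where the fundamental theorem of finitely generated abelian groups forces a copy of $\mathbb Z$ inside the infinite group $G$. If instead $G$ is a torsion group, then every cyclic subgroup of $G$ is finite, hence proper; were there only finitely many of them, then from $G=\bigcup_{C\text{ cyclic}}C$ the group $G$ would be a finite union of finite sets and therefore finite, contradicting that $G$ is infinite. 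Thus $G$ again has infinitely many proper cyclic subgroups. Finally, Theorem~\ref{ct1} (or directly Lemma~\ref{permutability cyclic l2}) promotes these subgroups to a $K_r$ inside $\Gamma_c(G)$.

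The main point requiring care, rather than a genuine obstacle, is ensuring that the cyclic subgroups exhibited are both \emph{proper} and \emph{pairwise distinct}: in the infinite-order case this is immediate from $\langle x^{n}\rangle=\langle x^{m}\rangle\iff|n|=|m|$ together with $\langle x^{n}\rangle\subsetneq\langle x\rangle\subseteq G$ for $n\ge 2$, and in the torsion case the finite-versus-infinite argument handles it cleanly. The dichotomy ``finitely generated versus not finitely generated'' used in the excerpt is merely an alternative bookkeeping of the same idea: a finitely generated infinite abelian group has positive free rank and hence contains $\mathbb Z$, while a non-finitely-generated one supplies infinitely many generators whose cyclic hulls already give the required distinct proper subgroups.
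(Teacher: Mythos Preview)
Your proposal is correct and follows essentially the same approach as the paper: reduce to showing that an infinite abelian group has infinitely many proper cyclic subgroups (since abelianness makes every pair permute), and then handle this via a two-case split. Your dichotomy (element of infinite order versus torsion) is a minor variant of the paper's (finitely generated versus not), as you yourself note; if anything, your torsion case, handled by the covering argument $G=\bigcup_{C\text{ cyclic}}C$, is more careful than the paper's somewhat informal ``take the cyclic groups generated by each generating element''.
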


Next, we consider the infinite non-abelian groups.
Recall that an infinite non-abelian group $G$  in which every proper subgroups  of $G$
have order a fixed prime number $p$ is called a \textit{Tarski monster group}. Existence of such groups was given by  Ol'shanskii in \cite{olshan}.
In general, the existence of infinite non-abelian groups in which the order of all proper subgroups are of prime order (primes not necessarily distinct)
were also given by him in \cite[Theorem 35.1]{olshan1}. Also M. Shahryari in \cite[Theorem 5.2]{shah} give existence of countable non-abelian simple groups with the property that
their all non-trivial finite subgroups are cyclic of order a fixed prime $p$ (of course this existence can also be deduced from the results of \cite{olshan1}). It is easy to see that the permutability graph of cyclic subgroups of the above mentioned first two class of non-abelian groups are totally disconnected and for the third class of non-abelian groups it is totally disconnected if that group does not have $\mathbb Z$ as a subgroup. In the next result,
we characterize the infinite non-abelian groups whose permutability graph of cyclic subgroups is totally disconnected.

\begin{thm}\label{ct108}
Let $G$ be an infinite group. Then $\Gamma_c(G)$ is totally disconnected if and only if every non-trivial finite subgroup of $G$ is of prime order (primes not necessarily distinct) and $\mathbb Z$ is not a subgroup of $G$.
\end{thm}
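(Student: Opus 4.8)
\textbf{The plan is to} prove both directions of the equivalence, treating the condition ``totally disconnected'' as asserting that no two distinct proper cyclic subgroups permute. The two hypotheses to characterize are: every non-trivial finite subgroup has prime order, and $\mathbb{Z}$ is not a subgroup of $G$.

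For the forward direction, suppose $\Gamma_c(G)$ is totally disconnected. First I would rule out $\mathbb{Z}$ being a subgroup: if $\mathbb{Z}\leq G$, then by Theorem~\ref{ct1} the graph $\Gamma_c(\mathbb{Z})$ is a subgraph of $\Gamma_c(G)$, and since $\mathbb{Z}$ is cyclic its proper cyclic subgroups all permute, forcing edges in $\Gamma_c(G)$, a contradiction. Next I would show every non-trivial finite subgroup $F$ has prime order. Suppose not; then $|F|$ is divisible either by two distinct primes or by $p^2$ for some prime $p$. If $F$ has two distinct prime divisors, then by solvability of finite groups $F$ possesses a Sylow basis whose members permute pairwise, producing an edge (this is exactly the mechanism used in Corollary~\ref{ct102}). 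If $p^2 \mid |F|$ for some prime $p$, then $F$ contains a subgroup of order $p^2$, which is either $\mathbb{Z}_{p^2}$ (whose subgroup of order $p$ permutes with it) or $\mathbb{Z}_p\times\mathbb{Z}_p$ (whose cyclic subgroups pairwise permute, by \eqref{e1}); in either case $\Gamma_c(G)$ has an edge by Theorem~\ref{ct1}. Thus every non-trivial finite subgroup is forced to have prime order.

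For the converse, assume every non-trivial finite subgroup is of prime order and $\mathbb{Z}\not\leq G$; I must show $\Gamma_c(G)$ is totally disconnected. A proper cyclic subgroup $H$ of $G$ is either finite or infinite cyclic. If some proper cyclic $H$ were infinite cyclic, then $H\cong\mathbb{Z}$, contradicting $\mathbb{Z}\not\leq G$; so every vertex of $\Gamma_c(G)$ is a finite cyclic subgroup, hence of prime order by hypothesis. The key step is then to show that two distinct subgroups of prime order cannot permute. If $H$ and $K$ are distinct subgroups of orders $p$ and $q$ (primes, possibly equal) with $HK=KH$, then $HK$ is a finite subgroup of order $|H||K|/|H\cap K|$; since $H\neq K$ we have $H\cap K=\{1\}$, so $|HK|=pq$, which is not prime. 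But $HK$ is a non-trivial finite subgroup, contradicting the hypothesis that all such subgroups have prime order. Hence no edge exists and $\Gamma_c(G)$ is totally disconnected.

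\textbf{The main obstacle} I anticipate is the bookkeeping in the forward direction: one must cover every way a finite subgroup could fail to have prime order and, in each case, exhibit a concrete permuting pair inside $G$, invoking Theorem~\ref{ct1} to transport the resulting edge up to $\Gamma_c(G)$. The case analysis parallels the finite classification already established (Proposition~\ref{ct2} and Corollary~\ref{ct102}), so the reductions should go through cleanly, but care is needed to ensure the produced subgroups are genuinely \emph{proper} cyclic subgroups of $G$ so that they are actual vertices of $\Gamma_c(G)$. The converse is the cleaner half, resting entirely on the elementary fact that permutability of two intersecting-trivially prime-order subgroups would manufacture a subgroup of composite order $pq$.
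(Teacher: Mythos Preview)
Your approach is essentially the same as the paper's: both directions are argued the same way, with the converse shown by observing that two distinct prime-order subgroups that permute would generate a finite subgroup of composite order, and the forward direction handled by producing an edge inside any finite subgroup of composite order (and inside $\mathbb{Z}$ if it embeds). The paper compresses your case analysis in the forward direction into a single invocation of Corollary~\ref{ct102}, but the substance is identical.

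There is, however, one imprecision you should fix. In your Case~A (``$F$ has two distinct prime divisors'') you assert that $F$ is solvable and therefore possesses a Sylow basis. This is false in general: $A_5$ has three distinct prime divisors and is not solvable. What makes the argument go through is that your two cases overlap, and Case~B ($p^2\mid |F|$) already handles every non-squarefree $|F|$. Thus Case~A is only genuinely needed when $|F|$ is squarefree, and groups of squarefree order \emph{are} solvable (indeed supersolvable). You should either say this explicitly, or---as the paper does---bypass the issue entirely by citing Corollary~\ref{ct102}, which already packages the full finite classification and directly gives that any finite group of composite order other than $\mathbb{Z}_{p^2}$ has a non-totally-disconnected $\Gamma_c$. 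Your worry about the exhibited subgroups being \emph{proper} in $G$ is a non-issue here, since $G$ is infinite and all the subgroups you produce are finite.
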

\begin{proof}
It is easy to see that if every proper subgroup of $G$ is of prime order (primes not necessarily distinct) and $\mathbb Z$ is not a subgroup of $G$ then $\Gamma_c(G)$ is totally disconnected.
Conversely, suppose that $\Gamma_c(G)$ is totally disconnected. Then by Theorem~\ref{ct17}, $G$ must be non-abelian.
Suppose not every proper subgroup of $G$ is of prime order, then we have the following possibilities.

(i) $G$ may have a subgroup whose order is a composite number; or

(ii) all the subgroups of $G$ may have infinite order.

If $G$ is of type (i), then let $H$ be a subgroup of $G$ of composite order. If $H \ncong Z_{p^2}$, then by Corollary~\ref{ct102},
$\Gamma_c(H)$ is not totally disconnected.
If $H \cong Z_{p^2}$, then $H$ and its subgroup of order $p$ permutes with each other. So it follows that $\Gamma_c(G)$ is not totally disconnected.

If $G$ is of type (ii), then it must have $\mathbb Z$ as a subgroup  and so by Theorem~\ref{ct17},  $\Gamma_c(G)$ is not totally disconnected.
Hence the proof.
\end{proof}
%

\section{Further results on $\Gamma_c(G)$}\label{sec:9}
Recall that a subgroup $H$ of a group $G$ is said to be \emph{permutable }if it permutes with all the subgroups of $G$.
In \cite{iwa}, Iwasawa characterized the groups whose subgroups are permutable.
\begin{thm}\label{ct30}\textbf{(\cite{iwa})}
 A group whose subgroups are permutable is a nilpotent group
in which for every Sylow $p$-subgroup $P$, either $P$ is a direct product of a
quaternion group and an elementary abelian 2-group, or $P$ contains an
abelian normal subgroup $A$ and an element $b\in P$ such that $P=A\langle b\rangle$ and
there exists a natural number $s$, with $s\geq2$ if $p= 2$, such that $a^b= a^{1+p^s}$
for every $a\in A$.
\end{thm}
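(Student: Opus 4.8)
The plan is to recover Iwasawa's structure theorem in three stages: reduce an arbitrary finite group with all subgroups permutable to its primary components, dispose of the Hamiltonian case, and then extract the metabelian power-automorphism structure in the remaining case, which is where the genuine difficulty lies. (Since the statement is phrased in terms of Sylow $p$-subgroups, I work in the finite setting, which is what we need for our applications.)

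First I would reduce to $p$-groups. A permutable subgroup of a finite group is subnormal (Ore's theorem), so in $G$ every subgroup is subnormal; a finite group all of whose subgroups are subnormal is nilpotent, hence $G$ is the direct product of its Sylow subgroups $P_p$. Moreover every subgroup $H$ of the nilpotent group $G$ is the direct product of the intersections $H\cap P_p$, and elements lying in distinct components commute. Consequently two subgroups of $G$ permute if and only if their $p$-components permute for each $p$, so $G$ has all subgroups permutable if and only if every $P_p$ does. This isolates the problem as one about $p$-groups, and the asserted description is stated component by component.

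Next I would split the $p$-group analysis. If $P$ is Hamiltonian, i.e.\ every subgroup of $P$ is normal, then the Dedekind--Baer classification gives $P\cong Q_8\times E$ with $E$ an elementary abelian $2$-group (necessarily $p=2$); this is the first alternative. So assume $P$ is non-abelian but not Hamiltonian, so that $P$ possesses a non-normal --- yet permutable --- subgroup. The goal is to produce an abelian normal subgroup $A$ and an element $b$ with $P=A\langle b\rangle$ and $a^b=a^{1+p^s}$ for every $a\in A$, with $s\geq 2$ when $p=2$. The strategy is to exploit permutability of cyclic subgroups: for $x,y\in P$ the product $\langle x\rangle\langle y\rangle=\langle y\rangle\langle x\rangle$ is a subgroup, and analysing these cyclic-by-cyclic products forces conjugation by any element to act on each cyclic subgroup as a power map. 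Pushing this through shows $P$ is metabelian with cyclic commutator behaviour; one then takes $A$ to be a maximal abelian normal subgroup, proves $|P:A|=p$, and checks that any $b\notin A$ induces a uniform power automorphism $a\mapsto a^{1+p^s}$ on all of $A$.

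The hard part is precisely this last step: showing that $|P:A|=p$ and, more delicately, that a single exponent $s$ governs the action of $b$ on every element of $A$ simultaneously, rather than $b$ acting with element-dependent exponents. This uniformity is exactly what separates genuinely permutable $p$-groups from merely lattice-modular ones, and it is the technical heart of Iwasawa's argument. Finally, the inequality $s\geq 2$ for $p=2$ is forced by the non-Hamiltonian assumption: allowing $s=1$ reproduces a relation of quaternion type $a^b=a^{-1}$ and throws $P$ back into the first alternative, so the two cases are genuinely disjoint. Since the abelian case fits the second alternative trivially (take $A=P$ and $s$ large enough that $p^s$ exceeds the exponent of $P$, so that $a^{1+p^s}=a$), combining the three stages yields the stated dichotomy for every $P_p$ and hence the description of $G$.
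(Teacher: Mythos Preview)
The paper does not supply a proof of this theorem at all: it is quoted verbatim as Iwasawa's classical result with a citation to \cite{iwa}, and is then used as a black box in the one-line Theorem~\ref{ct50}. So there is nothing in the paper to compare your argument against.

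That said, your outline is the standard route to Iwasawa's theorem and is correct in its architecture: Ore's theorem gives subnormality of permutable subgroups, hence nilpotency in the finite case; the reduction to Sylow components is straightforward; and the Dedekind--Baer classification disposes of the Hamiltonian alternative. You are also right to flag the genuine content as lying in the non-Hamiltonian $p$-group case, specifically in proving that a maximal abelian normal subgroup has index $p$ and that conjugation by a complement acts by a \emph{uniform} power $1+p^s$. Your sketch of that step is honest but thin: ``pushing this through shows $P$ is metabelian'' and ``one then \dots\ checks'' are exactly the places where Iwasawa's original argument (or the modern treatment in Schmidt's \emph{Subgroup Lattices of Groups}) does real work, and a reader could not reconstruct the proof from what you have written. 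If you intend this as more than a pointer to the literature, that step needs to be filled in; if you intend it as a citation, then you are doing what the paper itself does, and no proof is needed here.
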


The next theorem classifies the groups whose permutability graph of cyclic subgroups are complete (see, also in \cite[p.14]{bal}).
\begin{thm}\label{ct50}
 Let $G$ be a group. Then $\Gamma_c(G)$ is complete if and only if $G$ is one of the groups given in Theorem~\ref{ct30}.
\end{thm}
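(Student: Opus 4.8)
The plan is to prove the equivalence by reading Theorem~\ref{ct30} as the statement that the groups listed there are exactly those all of whose subgroups are permutable, and then to establish the chain: $\Gamma_c(G)$ is complete $\iff$ every two cyclic subgroups of $G$ permute $\iff$ every two subgroups of $G$ permute. The last equivalence is Iwasawa's characterization (Theorem~\ref{ct30}); the genuine work is the middle step, and in particular its non-trivial direction, namely that permutability of all cyclic subgroups forces permutability of all subgroups.

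For the sufficiency, I would argue directly: if $G$ is one of the groups of Theorem~\ref{ct30}, then every subgroup of $G$ is permutable, so in particular any two cyclic subgroups permute; hence every pair of distinct vertices of $\Gamma_c(G)$ is adjacent and $\Gamma_c(G)$ is complete. For the necessity, suppose $\Gamma_c(G)$ is complete. First I would dispose of the case where $G$ is cyclic: then $G$ is abelian, every subgroup is normal and hence permutable, so $G$ appears in Theorem~\ref{ct30}. So assume $G$ is non-cyclic; then every cyclic subgroup of $G$ is proper, and every non-trivial one is a vertex of $\Gamma_c(G)$. Now take arbitrary subgroups $H,K\le G$ and elements $h\in H$, $k\in K$. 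If $h=1$ or $k=1$ then trivially $hk\in KH$; otherwise $\langle h\rangle$ and $\langle k\rangle$ are vertices of $\Gamma_c(G)$, so either they coincide or, by completeness, they are adjacent, and in either case $\langle h\rangle\langle k\rangle=\langle k\rangle\langle h\rangle$. Since $hk\in\langle h\rangle\langle k\rangle=\langle k\rangle\langle h\rangle\subseteq KH$, we get $hk\in KH$. As $h,k$ were arbitrary, $HK\subseteq KH$, and by symmetry $HK=KH$. Thus every pair of subgroups of $G$ permutes, and Theorem~\ref{ct30} then places $G$ among the listed groups.

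The step I expect to be the main obstacle is not the algebra but the bookkeeping forced by the vertex-set convention: the vertices of $\Gamma_c(G)$ are only the \emph{non-trivial proper} cyclic subgroups, so the element-wise argument must be protected against the two boundary situations in which $\langle h\rangle$ or $\langle k\rangle$ fails to be a vertex. These are exactly $h=1$ (or $k=1$), handled trivially, and $\langle h\rangle=G$, which is why the cyclic case is separated off at the outset so that no $\langle h\rangle$ can equal $G$. Once these cases are isolated, the reduction from cyclic permutability to full permutability is immediate, and the structural description is supplied verbatim by Iwasawa's theorem; in particular the argument uses no finiteness hypothesis and so applies to arbitrary $G$.
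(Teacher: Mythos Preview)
Your argument is correct. The paper itself gives no proof of this theorem: it is stated immediately after Theorem~\ref{ct30} with only a parenthetical reference to \cite[p.~14]{bal}, so there is nothing to compare against line by line.

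What you supply is precisely the missing step, namely the reduction ``every two proper cyclic subgroups permute $\Rightarrow$ every two subgroups permute''. Your elementwise computation $hk\in\langle h\rangle\langle k\rangle=\langle k\rangle\langle h\rangle\subseteq KH$ is the standard way to do this, and you have correctly isolated the two boundary cases forced by the vertex convention of $\Gamma_c(G)$: the trivial cyclic subgroup (handled by $h=1$ or $k=1$) and the possibility $\langle h\rangle=G$ (ruled out by treating cyclic $G$ separately). With those cases out of the way the appeal to Iwasawa's characterization finishes the proof; your observation that no finiteness hypothesis enters matches the level of generality at which the paper states both Theorem~\ref{ct30} and Theorem~\ref{ct50}.
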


\begin{thm}\label{ct19}
 Let $G$ be a group with a permutable proper cyclic subgroup. Then $\Gamma_c(G)$ is regular if and only if $\Gamma_c(G)$ is complete.
\end{thm}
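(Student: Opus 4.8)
The plan is to recognize that the hypothesis furnishes a universal (dominating) vertex of $\Gamma_c(G)$, and then to invoke the elementary fact that a regular graph possessing a universal vertex must be complete. With this observation in hand, both directions of the equivalence fall out immediately.

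First I would dispose of the easy direction: if $\Gamma_c(G)$ is complete, say $\Gamma_c(G)\cong K_n$, then every vertex has degree $n-1$, so $\Gamma_c(G)$ is trivially regular. No group theory is needed here.

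For the converse, let $H$ be a permutable proper cyclic subgroup of $G$, which exists by hypothesis. Being permutable, $H$ permutes with every subgroup of $G$, and in particular with every proper cyclic subgroup of $G$, that is, with every other vertex of $\Gamma_c(G)$. Hence $H$ is adjacent to all the remaining vertices, so $\deg_{\Gamma_c(G)}(H)=|V(\Gamma_c(G))|-1$; in other words $H$ is a universal vertex. Now assume $\Gamma_c(G)$ is regular and set $n=|V(\Gamma_c(G))|$. Regularity together with the degree just computed for $H$ forces $\deg_{\Gamma_c(G)}(v)=n-1$ for every vertex $v$, and a simple graph on $n$ vertices all of whose vertices have degree $n-1$ is necessarily complete, since each vertex is then joined to all the others. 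Therefore $\Gamma_c(G)$ is complete.

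There is essentially no serious obstacle in this argument; the only point worth keeping in mind is that the conclusion remains valid in degenerate cases, because a single-vertex graph $K_1$ is simultaneously complete and $0$-regular, so the stated equivalence also holds when $\Gamma_c(G)$ has only one vertex. The entire content of the theorem is thus the observation that a permutable cyclic subgroup contributes a dominating vertex, after which regularity propagates maximal degree to every vertex and forces completeness.
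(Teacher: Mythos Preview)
Your proof is correct and follows essentially the same approach as the paper's: both arguments observe that the permutable proper cyclic subgroup is a vertex of maximum possible degree, and that regularity then forces every vertex to have this degree, yielding completeness. Your write-up is simply more explicit about the details (naming the universal vertex, handling the degenerate one-vertex case), but the underlying idea is identical.
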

\begin{proof}
Let $N$ be a permutable cyclic subgroup of $G$.  Assume that $\Gamma_c(G)$ is regular.  Since $N$ permutes with all the cyclic subgroup of $G$,
so from the regularity of $\Gamma_c(G)$, it follows that
any two vertices in $\Gamma_c(G)$ are adjacent and hence $\Gamma_c(G)$ is complete. Converse of the result is obvious.
\end{proof}

\begin{thm}\label{ct18}
 Let $G$ be a group with a permutable proper cyclic subgroup. Then $\Gamma_c(G)$ is connected and  $diam(\Gamma_c(G))\leq2$.
\end{thm}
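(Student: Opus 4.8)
The plan is to exhibit the given permutable cyclic subgroup as a \emph{dominating vertex} of $\Gamma_c(G)$ (a vertex adjacent to every other vertex), from which both connectivity and the diameter bound follow immediately. First I would let $N$ denote a permutable proper cyclic subgroup of $G$. Since $N$ is a proper cyclic subgroup, it is by definition a vertex of $\Gamma_c(G)$, so there is at least one such vertex to work with. The essential point is that, by the definition of a permutable subgroup recalled at the start of Section~\ref{sec:9}, $N$ permutes with \emph{all} subgroups of $G$; in particular it permutes with every proper cyclic subgroup $H \neq N$. Hence $N$ is adjacent to every other vertex of $\Gamma_c(G)$.

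Granting that $N$ is a dominating vertex, the two conclusions are routine. For connectedness, I would take any two distinct vertices $H$ and $K$: each of them is joined to $N$ by an edge (or equals $N$), so there is a walk, hence a path, between $H$ and $K$; thus every pair of vertices lies in the same component and $\Gamma_c(G)$ is connected. For the diameter bound, I would argue by cases on a pair of distinct vertices $H, K$. If one of them is $N$, or if $H$ and $K$ happen to permute, then $d(H,K)=1$. Otherwise neither equals $N$ and they are non-adjacent, in which case $H,\,N,\,K$ is a path of length $2$, giving $d(H,K)=2$. In every case $d(H,K)\leq 2$, so $diam(\Gamma_c(G)) \leq 2$.

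There is no genuine obstacle here; the only thing requiring care is to confirm that $N$ is actually a vertex (which needs $N$ to be \emph{proper} cyclic, exactly the hypothesis) and that permutability of $N$ with every subgroup yields adjacency with every \emph{distinct} cyclic-subgroup vertex, so that $N$ dominates the whole graph. Once the universal-vertex observation is in place, the statement is an immediate consequence of the elementary fact that a graph possessing a vertex adjacent to all others is connected with diameter at most $2$.
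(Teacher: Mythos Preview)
Your proposal is correct and is essentially the same argument as the paper's: both observe that the permutable proper cyclic subgroup $N$ is adjacent to every other vertex, and then use the path $H$--$N$--$K$ to bound distances by~$2$. The paper merely splits into the case where all cyclic subgroups permute (diameter~$1$) versus the case where some pair $H,K$ does not permute (giving $d(H,K)=2$ via $N$), which is exactly your case analysis.
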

\begin{proof}
If every cyclic subgroups of $G$ are permutable, then obviously $\Gamma_c(G)$ is connected and $diam (\Gamma_c(G))=1$. Let $N$ be a permutable proper cyclic
subgroup of $G$. Suppose $H$ and $K$ are two proper cyclic subgroups of $G$ such that $HK\neq KH$. Then we have a path $H-N-K$ in $\Gamma_c(G)$ and so
 $\Gamma_c(G)$ is connected and $diam(\Gamma_c(G))=2$.
\end{proof}

\begin{prob} Which groups have connected permutability graph of cyclic subgroups $?$ and estimate their diameter.
\end{prob}
In the next result, we classify  the abelian groups  whose permutability graph of cyclic subgroups are planar.

\begin{thm}\label{ct16}
 Let $G$ be an abelian group and $p$, $q$ be distinct primes. Then $\Gamma_c(G)$ is planar if and only if $G$ is isomorphic to one of the following:
 $\mathbb Z_{p^\alpha} (\alpha=2,3,4,5)$,
 $\mathbb Z_{pq}, \mathbb Z_{p^2 q}$,
 $\mathbb Z_2\times \mathbb Z_2$, $\mathbb Z_3\times \mathbb Z_3$.

\end{thm}
\begin{proof}If $G$ is infinite abelian, then by Theorem \ref{ct17}, $\Gamma_c(G)$ is non-planar. So in the rest of the proof, we assume that $G$ is finite.

 Suppose $G$ is cyclic, then with the notations used in the proof of Proposition  \ref{ct2} and by \eqref{e5}, we have $\Gamma_c(G)\cong K_r$. So $\Gamma_c(G)$ is planar if and only if $r\leq 4$.
This is true only when one of the following holds:
\begin{enumerate}
\item[(i)] $k=1$ with $\alpha_1<6$;
\item[(ii)] $k=2$ with $\alpha_1=1,\alpha_2=1$;
\item[(iii)] $k=2$ with $\alpha_1=2,\alpha_2=1$.
\end{enumerate}
If $G$ is non-cyclic, then we need to consider the following cases:

\noindent\textbf{Case 1}: $G \cong \mathbb Z_p \times \mathbb Z_p$. Then the number of proper  subgroups of $G$ is $p+1$;
 they are $\langle (1,0) \rangle$, and $\langle x, 1\rangle$, $x \in \{0,1,\ldots ,p-1  \}$. By \eqref{e1}, $\Gamma_c(G)$ is
planar only when $p=2,3$.

\noindent\textbf{Case 2:} $G\cong{\mathbb Z_{p^2}} \times{\mathbb Z_p}$. Then $\langle(1, 0)\rangle$, $\langle (1,1) \rangle$, $\langle (p,0) \rangle$,
$\langle (0,1) \rangle$, $\langle (p,1) \rangle$ are proper subgroups of $G$, so $\Gamma_c(G)$ contains $K_5$ as a subgraph.

\noindent\textbf{Case 3:} $G\cong \mathbb Z_{pq}\times \mathbb Z_p$. Then $\mathbb Z_{pq}$, $\mathbb Z_q$, $\mathbb Z_p\times \mathbb Z_p$ are proper subgroups of $G$. Here
$\mathbb Z_p\times \mathbb Z_p$ has at least three proper subgroups of order $p$, so these three subgroups together with $\mathbb Z_{pq}$,
$\mathbb Z_q$ forms $K_5$ as a subgraph of $\Gamma_c(G)$.

\noindent\textbf{Case 4:} $G \cong \mathbb Z_{p^k} \times \mathbb Z_{p^l}$, $k$, $l\geq 2$.
Then $\mathbb Z_{p^2}\times \mathbb Z_p$ is a proper subgroup of $G$,
so by Case 2 and by Theorem~\ref{ct1}, $\Gamma_c(G)$ contains $K_5$ as a subgraph.

\noindent\textbf{Case 5:} $G \cong\mathbb Z_p\times\mathbb Z_p\times\mathbb Z_p$. then $G$ has two subgroups each isomorphic to
$\mathbb Z_p\times \mathbb Z_p$. It follows that $G$ has at least five subgroups of order $p$ and so they form $K_5$ as a subgraph of $\Gamma_c(G)$.

\noindent\textbf{Case 6:} $G \cong {\mathbb Z}_{p_1^{\alpha_1}} \times {\mathbb Z}_{p_2^{\alpha_2}} \times \ldots \times {\mathbb Z_{p_k^{\alpha_k}}}$, where
$p_i$'s are primes and $\alpha_i\geq1$. If $k=2$ or 3, then $\alpha_i>1$, for some $i$ and if $k\geq 4$, then $\alpha_i\geq 1$. In either case, one of
$\mathbb Z_{p^2}\times \mathbb Z_p$, $\mathbb Z_{pq}\times \mathbb Z_p$, $\mathbb Z_p \times \mathbb Z_p \times \mathbb Z_p$ is a proper subgroup of $G$,
so by Cases 2, 3 and 5, $\Gamma_c(G)$ contains $K_5$ as a subgraph.

The result follows by combining all the above cases.
\end{proof}
Proposition~\ref{ct6} shows the existence of a finite non-abelian group whose permutability graph of cyclic subgroups is planar. Further, the Torski monster group is an example of an infinite non-abelian group whose permutability graph of cyclic subgroups is planar. Now we pose the following

\begin{prob} Classify all non-abelian groups whose permutability graph of cyclic subgroups are planar.
\end{prob}

The next result characterize some non-abelian groups by using their permutability graph of cyclic subgroups.
\begin{thm}\label{ct51}
Let $G$ be a finite group.
\begin{enumerate}[{\normalfont (i)}]
\item If $G$ is non-abelian and $\Gamma_c(G)\cong \Gamma_c(Q_8)$, then $G\cong Q_8$.
\item If $\Gamma_c(G)\cong \Gamma_c(S_3)$, then $G\cong S_3$.
\item If $\Gamma_c(G)\cong \Gamma_c(A_4)$, then $G\cong A_4$.
\end{enumerate}
\end{thm}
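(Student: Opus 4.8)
The plan is to prove each part by first computing $\Gamma_c$ of the target group explicitly, then showing that any group $G$ whose permutability graph matches must have the same order and subgroup-permutability structure, forcing $G$ to be isomorphic to the target. For part (i), I would first note that by \eqref{e2} we have $\Gamma_c(Q_8)\cong K_4$. So the hypothesis is that $G$ is non-abelian with $\Gamma_c(G)\cong K_4$. By Theorem~\ref{ct14}(iv), the only groups with $\Gamma_c(G)\cong K_4$ are $\mathbb Z_{p^5}$, $\mathbb Z_{p^2q}$, $\mathbb Z_3\times\mathbb Z_3$ and $Q_8$; the first three are abelian, so the non-abelian hypothesis immediately forces $G\cong Q_8$. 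This part is essentially a direct citation of the already-proved classification.

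For parts (ii) and (iii), the graphs are more distinctive. First I would compute $\Gamma_c(S_3)$: since $S_3\cong\mathbb Z_3\rtimes\mathbb Z_2$ is the non-abelian group of order $pq$ with $p=2$, $q=3$, Proposition~\ref{ct6} gives $\Gamma_c(S_3)\cong K_{1,3}$. For $A_4$, by \eqref{e4} we have $\Gamma_c(A_4)\cong K_3\cup\overline{K}_4$, a graph on seven vertices with exactly three edges forming one triangle and four isolated vertices. The key observation is that these two graphs have very specific vertex counts and degree sequences, so I would argue that any $G$ with a matching graph must have exactly the same number of proper cyclic subgroups and the same pattern of permutability, then run through the classification to eliminate all other candidates.

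The main work is the case analysis pinning down $G$ from the isomorphism type of $\Gamma_c(G)$. For (ii), $\Gamma_c(G)\cong K_{1,3}$ means $\Gamma_c(G)$ has exactly four vertices, is connected, contains no triangle and is not complete. By Theorem~\ref{ct14}(i), being $C_3$-free restricts $G$ to $\mathbb Z_{p^\alpha}$ $(\alpha=2,3)$, $\mathbb Z_{pq}$, or $\mathbb Z_q\rtimes\mathbb Z_p$; among these I would use the vertex count (a star on four vertices has four vertices) and the star shape to eliminate all but the non-abelian group of order $pq$, and then the requirement $K_{1,3}\cong K_{1,q}$ forces $q=3$, $p=2$, giving $G\cong S_3$. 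For (iii), $\Gamma_c(G)\cong K_3\cup\overline{K}_4$ has seven vertices, a unique triangle, four isolated vertices, and is disconnected; the disconnectedness together with the presence of a triangle is quite restrictive. I would invoke Corollary~\ref{ct102} (a finite group has totally disconnected $\Gamma_c$ only when $G\cong\mathbb Z_{p^2}$, which has too few vertices and no triangle) to note $G$ is not totally disconnected, then use the exact vertex count of seven and the componentwise structure to force $|G|=12$ and match against the three non-abelian groups of order $12$ classified in Proposition~\ref{ct7}, where only $A_4$ yields $K_3\cup\overline{K}_4$.

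The hard part will be ruling out \emph{infinite} groups and large finite groups for parts (ii) and (iii) cleanly, since a finite-vertex graph like $K_{1,3}$ or $K_3\cup\overline{K}_4$ forces $\Gamma_c(G)$ to have finitely many vertices, hence $G$ has only finitely many proper cyclic subgroups. By Theorem~\ref{ct17} no infinite abelian group qualifies, and the infinite non-abelian groups with finitely many cyclic subgroups are exactly the Tarski-type groups whose $\Gamma_c$ is totally disconnected (discussed before Theorem~\ref{ct108}), which cannot contain the required triangle in (iii) nor the connected star in (ii). Once $G$ is known to be finite with a bounded number of proper cyclic subgroups, the bound on $|G|$ follows from the classification results of Section~\ref{mainsec:3}, and the remaining identification is a finite check. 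I expect the delicate step to be confirming that the full induced subgraph — not merely a subgraph — matches the target, so that genuine isomorphism of graphs (not just containment) pins down the group uniquely.
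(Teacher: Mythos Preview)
Your treatment of (i) and (ii) matches the paper's argument exactly: cite Theorem~\ref{ct14}(iv) for (i), and for (ii) use Theorem~\ref{ct14}(i) to reduce to the $C_3$-free list $\mathbb Z_{p^\alpha}$ $(\alpha=2,3)$, $\mathbb Z_{pq}$, $\mathbb Z_q\rtimes\mathbb Z_p$, then apply \eqref{e5} and \eqref{e3} to see that only $\mathbb Z_q\rtimes\mathbb Z_p$ gives a star, and $K_{1,q}\cong K_{1,3}$ forces $q=3$, $p=2$.

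For (iii) you diverge from the paper, and your proposed route has a gap. You plan to use the vertex count seven and the disconnectedness of $K_3\cup\overline{K}_4$ to ``force $|G|=12$'', but nothing in Section~\ref{mainsec:3} gives a direct bound on $|G|$ from the number of proper cyclic subgroups, and disconnectedness alone is not handled by any classification result here. The paper instead makes the simple observation that $K_3\cup\overline{K}_4$ has maximum degree $2$ and is therefore claw-free; then Theorem~\ref{ct14}(v) immediately restricts $G$ to $\mathbb Z_{p^\alpha}$ $(\alpha=2,3,4)$, $\mathbb Z_{pq}$, $\mathbb Z_2\times\mathbb Z_2$, or $A_4$, and \eqref{e5}, \eqref{e1}, \eqref{e4} show that only $A_4$ yields $K_3\cup\overline{K}_4$. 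Once you notice claw-freeness, the argument is a one-liner; your detour through Corollary~\ref{ct102} and Proposition~\ref{ct7} is unnecessary. Finally, the statement already hypothesizes that $G$ is finite, so your entire paragraph about ruling out infinite and Tarski-type groups is superfluous.
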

\begin{proof}
\begin{itemize}
\item [(i);] By Theorem~\ref{ct14}(5), $Q_8$ is the only non-abelian group such that $\Gamma_c(Q_8)=K_4$, so the result follows.
\item  [(ii):] By Theorem~\ref{ct14}(1) and \eqref{e5}, \eqref{e3}, $S_3$ is the only group such that $\Gamma_c(S_3)=K_{1,3}$, so the result follows.
\item [(iii):] By Theorem~\ref{ct14}(6) and \eqref{e5}, \eqref{e1}, \eqref{e4}, $A_4$ is the only group such that $\Gamma_c(A_4)=K_3\cup \overline{K}_4$, so the result follows.
\end{itemize}
\end{proof}

\section*{Acknowledgements}
The authors would like to thank Professor A. Yu. Ol'shanskii,  Department of Mathematics,
Vanderbilt University, Nashville, Tennessee, USA for pointing out the reference \cite{olshan1} to our attention.

\end{document}